\colorlet{darkblue}{blue!90!black}
\colorlet{darkred}{red!90!black}
\colorlet{darkgreen}{green!60!black}
\newtheorem{theorem}{Theorem}[section]
\newtheorem{lemma}[theorem]{Lemma}
\newtheorem{corollary}[theorem]{Corollary}
\theoremstyle{definition}
\newtheorem{assumption}[theorem]{Assumption}
\theoremstyle{remark}
\newtheorem{remark}[theorem]{Remark}
\Crefname{assumption}{Assumption}{Assumptions}
\newcommand{\vn}[1]{{\vert\kern-0.23ex\vert\kern-0.23ex\vert #1 
    \vert\kern-0.23ex\vert\kern-0.23ex\vert}}
\DeclarePairedDelimiter\floor{\lfloor}{\rfloor}
\def\eps{\varepsilon}
\def\path{\mathrm{path}}
\def\({\left(}
\def\){\right)}
\begin{document}

\title{A note on the weak rate of convergence for the Euler-Maruyama scheme with Hölder drift}
\author{Teodor Holland}
\maketitle

\begin{abstract}
We consider SDEs with bounded and $\alpha$-Hölder continuous drift, with $\alpha \in (0,1)$, driven by multiplicative noise. We show that under sufficient conditions on the diffusion matrix, which guarantee the existence of a unique strong solution, the weak rate of convergence for the Euler-Maruyama scheme is almost $(1+\alpha)/2$.
The present paper forms part of the author's master's thesis.
\end{abstract}
\tableofcontents

\section{Introduction}
Consider a complete probability space $(\Omega, \mathscr{F},\mathbb{P})$ carrying a $d$-dimensional Wiener process $(W_t)_{t\in [0,1]}$ for arbitrary $d \in \mathbb{Z}^+$ . Let $b:\mathbb{R}^d \rightarrow \mathbb{R}^d$ be bounded and $\alpha$-Hölder continuous for some $\alpha \in (0,1)$. Let $\sigma:\mathbb{R}^d \rightarrow \mathbb{R}^{d\times d}$ be bounded, with bounded and Lipschitz-continuous first partial derivatives. We will furthermore assume that the diffusion matrix satisfies the uniform ellipticity condition that $\sigma \sigma^T \geq \lambda I$ in the sense of positive definite matrices, for some constant $\lambda>0$.
For times $t \in [0,1]$ we will consider the SDE
\begin{equ}[oursde]
dX_t = b(X_t)dt + \sigma(X_t)dW_t, \quad X_0 = x_0.
\end{equ}
It is known that while the deterministic counterpart of (\ref{oursde}) is not well-posed, the presence of the noise turns it into a well posed problem (see: \cite{zvonkin1974transformation}, \cite{veretennikov1981strong}, \cite{davie2007uniqueness}).

We will moreover consider the Euler-Maruyama scheme 
with identical initial condition
\begin{equ}[ourscheme]
dX_t^n = b(X_{\kappa_n(t)}^n)dt + \sigma(X_{\kappa_n(t)}^n)dW_t, \quad X_0^n = x_0,
\end{equ}
where $\kappa_n(t) := \frac{\lfloor nt \rfloor}{n}$ and $\floor{\cdot}$ denotes integer part.

\subsection{Literature}
Maruyama's stochastic generalisation of the Euler-scheme is the simplest and most commonly used numerical method for approximating solutions of SDEs.
The convergence of the Euler-Maruyama scheme for the smooth coefficient case has been studied extensively. It is well-known that for bounded and Lipschitz coefficients the strong rate is $1/2$, and if the coefficients are four times continuously differentiable with bounded partial derivatives, then the weak rate is $1$ (see e.g. \cite{pages2018numerical}). The study of the weak error in the smooth setting has been pioneered by Talay and Tubaro (see e.g. \cite{talay1990expansion}) who derived weak error expansions for the scheme.

The case of irregular drift is significanly less well-understood and has attracted much attention in the past years. Below we give an overview of some of the recent results.

Even though it has been shown in 1980 by Veretennikov that the SDE (\ref{oursde}) admits a unique strong solution when $b$ is merely bounded and measurable, it has only been shown in 1996 by Gyöngy and Krylov (see \cite{gyongy1996existence} and \cite{gyongy2021existence}) that in the same setting the Euler-Maruyama scheme converges in probability. 

Much of the research on the convergence of the scheme is concerned with the strong rate.
In \cite{pamen2017strong} Pamen and Taguchi consider SDEs with additive noise  and $\alpha$-Hölder drift with $\alpha \in (0,1)$, and they show strong rate of order $\alpha/2$. They moreover derive a similar result for when the driving noise is a truncated symmetric $\tilde{\alpha}$-stable process with $\tilde{\alpha} \in (1,2)$.
The case of piecewise Lipschitz drift has been studied in \cite{leobacher2018convergence} by Leobacher and Szölgyenyi, where they achieve a strong rate of  $1/4$.
In \cite{bao2019convergence} Bao, Huang, and Yuan derive strong convergence results in the nondegenerate case where the drift is Dini and unbounded, and in the degenerate case under Hölder-Dini and Dini and conditions on the drift.
In $\cite{mikulevivcius2018rate}$, Mikulevi{\v{c}}ius and Xu study the strong error of SDEs driven by multiplicative $\tilde{\alpha}$-stable processes with $\tilde{\alpha} \in [1,2)$ and with Hölder drift.
In \cite{muller2020performance} Müller-Gronbach and Yaroslavtseva consider SDEs in a $1$-dimension with piecewise Lipschitz drift and multiplicative noise, and show convergence of order $1/2$.
In \cite{butkovsky2021approximation} Butkovsky, Dareiotis and Gerencsér consider SDEs with $\alpha$-Hölder drift, driven by additive fractional Brownian noise with Hurst parameter $H \in (0,1)$. They show convergence almost of order $(\frac{1}{2}+\alpha H)\wedge 1$, provided that  $\alpha> 1-1/(2H)$.
In \cite{dareiotis2021quantifying} rate $1/2$ is achieved for bounded and measurable drift.
In \cite{le2021taming} Lê and Ling study the tamed Euler-Maruyama scheme for drift satisfying the so-called Ladyzhenskaya–Prodi–Serrin condition.

Due to its relevance in applications, the weak error of the Euler-Maruyama scheme with irregular coefficients has also attraced recent interest.
Recall the classic result by Mikulevičius and Platen in \cite{mikulevicius1991rate} that in our setting the weak rate should be at least ${\alpha/2}$. However this rate deteriorates for small $\alpha$. 
In  \cite{kohatsu2011weak},  Kohatsu-Higa, Lejay and Yasuda consider the convergence of a tamed Euler-Maruyama scheme.
A possible way to study the weak convergence of the scheme is through weak error expansions. For recent work in this direction, we point the reader to e.g. \cite{huang2015stable}, and the references within.
In \cite{konakov2017weak} Konakov and Menozzi consider SDEs with explicitly time-dependent coefficients. In the case when the drift is $\alpha$-Hölder in space and $\alpha/2$-Hölder in time, they achieve weak rate $\alpha/2$.
For the case of $L^Q - L^\rho$-drift and additive noise, we point the reader to the recent work \cite{jourdain2021convergence} of Jourdain and Menozzi, where they show weak convergence of rate $\frac{1}{2}(1- (d/\rho+2/q))$.

Even from the recent results on strong approximation (see \cite{butkovsky2021approximation}, \cite{dareiotis2021quantifying}) it follows that in our setting the weak rate $\alpha/2$ cannot be optimal, since even the strong rate is $1/2$.
In fact, in \cite{dareiotis2021quantifying} it is shown that in the additive case if the drift is bounded and has regularity of order $\alpha$, then the strong rate is $(1+\alpha)/2$.
It is thus reasonable to conjecture that rate of $(1+\alpha)/2$ could be achieved for the weak error in the multiplicative noise case. This has been noted by Jourdain and Menozzi in \cite{jourdain2021convergence}.

\subsection{On the method of proof}\label{onthemethodofproofsection}
The proof of the main result uses the idea of using the Feynmann-Kac framework to express the weak error as a comparison between the initial- and terminal-time values of the solution of a parabolic PDE evaluated at the approximation process in space. According to \cite{bally1996law} this method was introduced by Talay  in \cite{talay1984efficient} and \cite{talay1986discretisation}, and independently by Milstein in \cite{mil1986weak}.
Using this method, we reduce the problem to having to prove  quadrature estimates. Showing that the desired quadrature estimates hold requires the use of stochastic sewing, which was developed by L{\^e} in \cite{le2020stochastic} and has been used in the context of regularization by noise for the Euler-Maruyama scheme e.g. in \cite{butkovsky2021approximation} and its weighted version in \cite{dareiotis2021quantifying} for proving similar bounds.
\subsection{Notation}
In the remainder of the introduction, we will introduce some standard notation, that will be used throughout the paper.

\subsubsection{Hölder-spaces}
Let $A\subseteq \mathbb{R}^d$ and $(B,|\cdot|)$ a normed space.
For $\alpha \in (0,1]$ and $f:A \to B$ we define the $\alpha$-Hölder seminorm of $f$ by
$$[f]_{C^\alpha(A,B)} : = \sup_{\substack{x,y \in A\\ x\neq y}} \frac{|f(x) - f(y)|}{|x-y|^\alpha}.$$
For $\alpha \in (0, \infty)$ we then denote by $C^\alpha(A,B)$ the space of all functions such that for all $l \in (\mathbb{Z}^+)^d$ multiindices with $|l|< \alpha$, the derivative $\partial^l f$ exists, and 
\begin{equ}
\| f\|_{C^\alpha(A,B)} := \sum_{|l|< \alpha} \sup_{x \in A} | \partial^l f(x) | + \sum_{\alpha-1<|l|<\alpha} [\partial^l f]_{C^{\alpha - |l|}(A,B)} <\infty.
\end{equ}
We furthermore define $C^0(A,B)$ to be the space of all measurable functions $f:A \to B$ such that
\begin{equ}
\|f\|_{C^0(A,B)} := \sup_{x \in A} |f(x)| < \infty.
\end{equ}
The definition of Hölder-spaces extends to negative $\alpha$ as follows: for $\alpha \in (-\infty,0)$ we say that the distribution $f$ is of class $C^\alpha(\mathbb{R}^d, \mathbb{R})$, if
\begin{equs}
\|f\|_{C^\alpha(A,B)} : = \sup_{\varepsilon \in (0,1]} \varepsilon^{-\alpha/2}\| \mathcal{P}_\varepsilon f \|_{C^0(\mathbb{R}^d, \mathbb{R})} <\infty,
\end{equs}
where $\mathcal{P}_\varepsilon f := p_\varepsilon * f$ denotes convolution with the standard heat kernel (\ref{heatkernel}).

When no ambiguity can arise, we will simply write $C^\alpha(A)$ or $C^\alpha$ to mean $C^\alpha(A,B)$. 

\begin{remark}
Using Hölder-spaces, the regularity assumptions on our coefficients can be stated simply as follows: $b \in C^\alpha$ for some $\alpha \in (0,1)$, and $\sigma \in C^2$.
\end{remark}

\subsubsection{Wasserstein metric}

We will see that our weak convergence result about the scheme $X_t^n$ can be rephrased as a convergence result about the probability measure associated with the scheme. A natural way of comparing probability densities is using the so-called Wasserstein metric, which is defined as follows:
Let $(M,d)$ be a metric space. We define the Wasserstein (also known as Kantorovich-Rubinstein) norm of a measure $\mu:\mathscr{B}(M) \to [0, \infty]$ by
\begin{equ}
\|\mu\|_{\text{Wass}} : = \sup\left\{ \int_M g(x) \mu(dx) : \|g\|_{C^1} \leq 1\right\}.
\end{equ}
We furthermore denote the norm induced by this metric by
\begin{equ}
d_{\text{Wass}}(\mu_1,\mu_2) : = \|\mu_1 - \mu_2\|_{\text{Wass}}.
\end{equ}

\subsubsection{Shorthands}
Consider the filtration generated by $W$, that is $\mathscr{F}^W_t := \sigma(W_s; s\leq t)$ and its augmentation

 $\mathscr{F}_t := \sigma(\mathscr{F}_t \cup \mathscr{N})$ by the collection of null sets
 $\mathscr{N} := \{N \subseteq \Omega : N\subseteq F \text{ for some } F \subseteq \mathscr{F}_\infty^W \text{ such that } \mathbb{P}(F) =0\}$. For $t \in [0,1]$ we will write $\mathbb{E}^t(\cdot)$ as a shorthand for the conditional expectation $\mathbb{E}(\cdot|\mathscr{F}_t )$.

Moreover, in proofs, we  will write $f \lesssim g$ to mean that $f \leq N g$ for some $N$ constant where the dependence of $N$ is as specified in the statement we are proving.

\section{Main Results}

The main result of this paper is as follows:

\begin{theorem}\label{mainresult}
Let $\alpha \in (0,1)$, $g \in C^\alpha$, and fix $\varepsilon>0$. Let $X$ be the solution of (\ref{oursde}) and $X^n$ be given by (\ref{ourscheme}). Then for all $n \in \mathbb{N}$, we have
\begin{equation}
\left|\mathbb{E}g(X_1) - \mathbb{E}g(X_1^n)\right| \leq N n^{-\frac{1+\alpha}{2} + \varepsilon}
\end{equation}
where $N$ is a constant depending only on $d,\varepsilon,\alpha, \lambda,\|b\|_{C^\alpha}, \|\sigma\|_{C^2}, \|g\|_{C^\alpha}$.
\end{theorem}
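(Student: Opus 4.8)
The plan is to route the weak error through the backward Kolmogorov equation associated with \eqref{oursde}, reduce it to quadrature estimates along the discretised process, and establish those by stochastic sewing. Let $u\colon[0,1]\times\R^d\to\R$ solve the terminal value problem $\partial_t u+\tfrac12\tr(\sigma\sigma^T D^2u)+b\cdot Du=0$ with $u(1,\cdot)=g$, so that $u(t,x)=\E[g(X_1)\mid X_t=x]$ and in particular $u(0,x_0)=\E g(X_1)$. Since $\sigma\sigma^T\in C^2$ is uniformly elliptic and $b,g\in C^\alpha$, parabolic regularity --- treating $b\cdot Du$ perturbatively against the semigroup generated by $\tfrac12\tr(\sigma\sigma^T D^2\cdot)$ --- gives, with constants depending only on the quantities listed in the theorem and in particular uniformly in $x_0$, the blow-up estimates
\[
\|Du(t,\cdot)\|_{C^0}\lesssim(1-t)^{-(1-\alpha)/2},\qquad \|D^2u(t,\cdot)\|_{C^0}\lesssim(1-t)^{-(2-\alpha)/2},\qquad [D^2u(t,\cdot)]_{C^\alpha}\lesssim(1-t)^{-1},
\]
together with the elementary bounds for $C^\alpha$ functions convolved with Gaussians that will be used repeatedly.

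Next I would apply It\^o's formula to $u(t,X^n_t)$ on $[0,1]$. As $\sigma$ is bounded and $t\mapsto\|Du(t,\cdot)\|_{C^0}$ is square-integrable on $(0,1)$ --- this is where $\alpha>0$ is used --- the stochastic integral is a true martingale; substituting the PDE evaluated at the random point $X^n_t$ turns the weak error into
\[
\E g(X^n_1)-\E g(X_1)=\E\int_0^1\big[(b(X^n_{\kappa_n(t)})-b(X^n_t))\cdot Du(t,X^n_t)+\tfrac12\tr((a(X^n_{\kappa_n(t)})-a(X^n_t))D^2u(t,X^n_t))\big]\,dt,
\]
where $a:=\sigma\sigma^T$. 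For the diffusion term I would use $a\in C^2$ together with $X^n_t-X^n_{\kappa_n(t)}=\sigma(X^n_{\kappa_n(t)})(W_t-W_{\kappa_n(t)})+b(X^n_{\kappa_n(t)})(t-\kappa_n(t))$: after Taylor expansion, the term carrying the increment $W_t-W_{\kappa_n(t)}$ against $D^2u(t,X^n_{\kappa_n(t)})$ vanishes upon conditioning on $\mathscr F_{\kappa_n(t)}$, while the drift-shift and the second-order remainders are $O(n^{-1})$ once paired with $\|D^2u(t,\cdot)\|_{C^0}\lesssim(1-t)^{-1+\alpha/2}$ and integrated in $t$. What survives is $\E\int_0^1 R_t\,dt+O(n^{-1})$, where $R_t$ is the sum of the two genuinely singular terms $(b(X^n_{\kappa_n(t)})-b(X^n_t))\cdot Du(t,X^n_t)$ and $G_t(W_t-W_{\kappa_n(t)})\cdot(D^2u(t,X^n_t)-D^2u(t,X^n_{\kappa_n(t)}))$ with $G_t$ bounded and $\mathscr F_{\kappa_n(t)}$-measurable, whose naive pointwise bounds carry weights $(1-t)^{-(1-\alpha)/2}$ and $(1-t)^{-1}$ that are not integrable near $t=1$.

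For each singular term I would condition on $\mathscr F_{\kappa_n(t)}$, exploiting that on a mesh interval the scheme has frozen coefficients so that the conditional law of $X^n_t$ is exactly Gaussian about $X^n_{\kappa_n(t)}$ with covariance $(t-\kappa_n(t))\,a(X^n_{\kappa_n(t)})$, and compute the resulting Gaussian averages of the $C^\alpha$-increments of $b$ and of $D^2u(t,\cdot)$. After these manipulations --- which for the drift term include separating off the contribution of $Du(t,X^n_{\kappa_n(t)})$, and for the diffusion term a Gaussian integration by parts --- each singular term decomposes into a part bounded in the relevant moments by $n^{-(1+\alpha)/2}(1-t)^{-1+\alpha/2}$, which is integrable in $t$ and hence $O(n^{-(1+\alpha)/2})$, and a leftover $\E\int_0^1\theta_t(X^n_{\kappa_n(t)})\,dt$ in which $\theta_t$ is only \emph{weakly} small: it is a continuous function that can be written as $\operatorname{div}V_t$ plus a directly integrable term, with $\|V_t\|_{C^0}\lesssim n^{-(1+\alpha)/2}w(t)$ for an integrable weight $w$ --- this divergence structure reflecting that convolving a $C^\alpha$ function with a Gaussian of scale $\sqrt{t-\kappa_n(t)}$ changes it by a second derivative of something of size $n^{-(1+\alpha)/2}$ --- whereas pointwise $\theta_t$ only satisfies $\|\theta_t\|_{C^0}\lesssim n^{-\alpha/2}w(t)$ (or $n^{-(1+\alpha)/2}(1-t)^{-1}$). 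The leftover is the crux. I would bound it by L\^e's stochastic sewing lemma, with the germ $A_{s,t}:=\E^s\int_s^t\theta_r(X^n_{\kappa_n(r)})\,dr$, for which $\E^s\delta A_{s,u,t}=0$ and $\|\delta A_{s,u,t}\|_{L_m}\le 2\|\E^u\int_u^t\theta_r(X^n_{\kappa_n(r)})\,dr\|_{L_m}$. For $\kappa_n(r)>u$ one evaluates $\E^u[\theta_r(X^n_{\kappa_n(r)})]$ against the transition density $p^n$ of the scheme --- which is Gaussian-bounded uniformly in $n$ and obeys $\|D_z p^n(u,x;\tau,\cdot)\|_{L^1}\lesssim(\tau-u)^{-1/2}$ --- by pairing $V_r$ with $D_z p^n$; this yields $\|\E^u\int_u^t\theta_r(X^n_{\kappa_n(r)})\,dr\|_{L_m}\lesssim n^{-(1+\alpha)/2}\int_u^t w(r)(\kappa_n(r)-u)^{-1/2}\,dr$, and interpolating with the crude bound from $\|\theta_r\|_{C^0}$ replaces the borderline exponent $|t-s|^{1/2}$ by $|t-s|^{1/2+\varepsilon'}$ --- this is where the $\varepsilon$ enters. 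The weighted stochastic sewing lemma, which absorbs the singular weight $w$, then identifies $\int_0^\cdot\theta_r(X^n_{\kappa_n(r)})\,dr$ with the sewing of $A$ and gives $\big|\E\int_0^1\theta_r(X^n_{\kappa_n(r)})\,dr\big|\lesssim n^{-(1+\alpha)/2+\varepsilon}$; summing the contributions proves the theorem.

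The main obstacle is this last step near $t=1$, where the Schauder bounds for $u$ degenerate: the gain $n^{-(1+\alpha)/2}$ must be produced \emph{simultaneously} with integrability in $t$. This forces one to play the fine-scale Gaussian averaging over $[\kappa_n(t),t]$ --- which supplies the $n$-power and, for the drift, is capped at exactly $n^{-(1+\alpha)/2}$ because $b\in C^\alpha$ --- against the transition-density bounds for $X^n$, all inside a weighted sewing estimate, while carefully tracking the interplay of the two small parameters $1/n$ and $1-t$. A further technical nuisance is the short range of $r$ for which $\kappa_n(r)\le u$, where the density argument is unavailable (as then $\mathscr F_{\kappa_n(r)}\subseteq\mathscr F_u$): there one uses that this range has length at most $\min(1/n,t-u)\le n^{-1/2}|t-u|^{1/2}$ and estimates $\theta_r$ crudely, consistently with the main bound.
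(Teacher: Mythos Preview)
Your high-level strategy matches the paper's: express the weak error via the backward Kolmogorov PDE, apply It\^o's formula to $u(t,X^n_t)$, and reduce to the two quadrature terms for the drift and diffusion parts. The Schauder bounds you quote for $u$ are exactly those the paper uses (Corollary~\ref{uestimates}).

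Where you diverge is in the technical implementation of the quadrature estimates, in three places. First, for the drift quadrature the paper does \emph{not} work with transition density bounds for $X^n$ itself. Instead it runs stochastic sewing for the \emph{driftless} scheme $\bar X^n$ (Lemma~\ref{bquadrature}), where the key regularisation input is Lemma~\ref{G'ofdriftless} (a Malliavin-calculus bound $|\E\partial_k G(\bar X^n_t)|\lesssim \|G\|_{C^0}t^{-1/2}$ from \cite{butkovsky2021approximation}), and then transfers the resulting $L_p$-bound to $X^n$ by Girsanov (Lemma~\ref{bquadraturegeneral}). Your route via $\|D_z p^n\|_{L^1}\lesssim(\tau-u)^{-1/2}$ is also viable --- for the Euler scheme with frozen coefficients each step is Gaussian regardless of the regularity of $b$ --- but you are asserting a uniform-in-$n$ parametrix bound that would need a separate justification. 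The paper's driftless+Girsanov device sidesteps this entirely. Second, for the blow-up at $t=1$ you invoke a weighted sewing lemma; the paper instead proves a standard sewing bound on $[s,t]\subset[0,T]$ carrying explicit factors $(1-T)^{-1/2}|t-s|^{1/2+\varepsilon}$ and $(1-T)^{-1}|t-s|^{1+\varepsilon}$, and then removes the restriction $T<1$ by a dyadic decomposition (Corollary~\ref{boundonbexpression}) that trades the $|t-s|^\varepsilon$ for a geometric sum. Third, for the diffusion term the paper does \emph{not} use sewing at all: since $\sigma\sigma^T\in C^2$, a direct conditioning-on-$\mathscr F_{\kappa_n(r)}$ computation (Lemma~\ref{sameboundforotherterm}) gives two pointwise bounds, $(1-r)^{-1}n^{-(1+\alpha)/2}$ and $(1-r)^{-1+\alpha/2}n^{-1/2}$, and splitting the time integral at $1-1/n$ yields $n^{-(1+\alpha)/2}\log n$. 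Your reduction of that term to another weakly-small $\theta_t$ and a second sewing argument would work, but is heavier machinery than needed. In short: your outline is sound, but the paper's choices (driftless scheme plus Girsanov, dyadic summation, direct estimate for the $\sigma$-term) avoid the density bound you assume and keep the diffusion estimate elementary.
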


This in particular implies that the distribution of $X_1^n$ converges at the same rate to the distribution of $X_1$ in the Wasserstein metric:

\begin{theorem}
Let $(X_t)_{t \in [0,1]}$ be the solution of (\ref{oursde}) and let $(X_t^n)_{t \in [0,1]}$ be given by (\ref{ourscheme})
Let $\mu_{X_1}$ and $\mu_{X_1^n}$ be the distributions of $X_1$ and $X_1^n$ respectively and fix $\varepsilon>0$. Then for all $n \in \mathbb{N}$, we have
\begin{equ}
d_{\text{\normalfont{Wass}}}(\mu_{X_1}, \mu_{X_1^n}) \leq Nn^{-\frac{1+\alpha}{2}+\varepsilon}
\end{equ}
where $N$  is a constant depending only on $d, \varepsilon,\alpha,\lambda,\|b\|_{C^\alpha}, \|\sigma\|_{C^2}$.
\end{theorem}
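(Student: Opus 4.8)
The plan is to derive this Wasserstein estimate as a direct corollary of Theorem~\ref{mainresult}. Recall that by the Kantorovich--Rubinstein duality built into the definition of $\|\cdot\|_{\mathrm{Wass}}$, we have
\begin{equ}
d_{\mathrm{Wass}}(\mu_{X_1}, \mu_{X_1^n}) = \sup\left\{ \int_{\mathbb{R}^d} g(x)\,(\mu_{X_1} - \mu_{X_1^n})(dx) : \|g\|_{C^1} \leq 1\right\}.
\end{equ}
For any fixed test function $g$ with $\|g\|_{C^1} \leq 1$, the integral on the right equals exactly $\mathbb{E}g(X_1) - \mathbb{E}g(X_1^n)$ by the definition of pushforward measures. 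So the whole task reduces to bounding this quantity uniformly over the unit ball of $C^1$.

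The point is then simply that $C^1 \hookrightarrow C^\alpha$ continuously on $\mathbb{R}^d$ for every $\alpha \in (0,1)$: if $\|g\|_{C^1}\le 1$ then $\|g\|_{C^\alpha} \le N_0$ for a constant $N_0 = N_0(d,\alpha)$. Indeed, $\sup_x|g(x)| \le 1$, and for the $\alpha$-Hölder seminorm one splits into the regimes $|x-y|\le 1$ and $|x-y|>1$: in the first, $|g(x)-g(y)| \le [g]_{C^1}|x-y| \le |x-y|^\alpha$ since $|x-y|\le 1$; in the second, $|g(x)-g(y)| \le 2\sup|g| \le 2 \le 2|x-y|^\alpha$. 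Hence $[g]_{C^\alpha}\le 2$ and $\|g\|_{C^\alpha} \le 3 =: N_0$.

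Now I would apply Theorem~\ref{mainresult} with this fixed $\varepsilon>0$ to each such $g$: since $\|g\|_{C^\alpha}\le N_0$ and $N_0$ depends only on $d$ and $\alpha$, the constant $N$ produced by Theorem~\ref{mainresult} — which depends on $d,\varepsilon,\alpha,\lambda,\|b\|_{C^\alpha},\|\sigma\|_{C^2},\|g\|_{C^\alpha}$ — can be taken to depend only on $d,\varepsilon,\alpha,\lambda,\|b\|_{C^\alpha},\|\sigma\|_{C^2}$ (replacing $\|g\|_{C^\alpha}$ by its bound $N_0$, using monotonicity of the constant in the $C^\alpha$-norm of the test function, or simply absorbing). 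This gives $|\mathbb{E}g(X_1) - \mathbb{E}g(X_1^n)| \le N n^{-\frac{1+\alpha}{2}+\varepsilon}$ with $N$ independent of the particular $g$, and taking the supremum over $\|g\|_{C^1}\le 1$ yields the claim.

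There is essentially no obstacle here; the only mild point of care is the bookkeeping of constants — one must check that the dependence of $N$ in Theorem~\ref{mainresult} on $\|g\|_{C^\alpha}$ is such that uniformly bounding $\|g\|_{C^\alpha}$ over the test class lets us drop that dependence, which it does since the dependence is through a fixed polynomial or linear factor in the statement's proof. Everything else is the duality characterization of the Wasserstein distance and the elementary embedding $C^1\hookrightarrow C^\alpha$.
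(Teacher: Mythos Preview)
Your proposal is correct and is exactly the argument the paper has in mind: the paper presents this theorem as an immediate consequence of Theorem~\ref{mainresult} (``This in particular implies\ldots'') without writing out a separate proof, and your derivation via the definition of $d_{\mathrm{Wass}}$ together with the elementary embedding $C^1\hookrightarrow C^\alpha$ is precisely the intended one-line deduction.
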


\section{Reduction to quadrature estimates}\label{reductionsection}
For motivation, we will begin by giving a sketch of our strategy of proof for the weak error bound, which will be presented in detail in section \ref{proofofmainresultsection}. Using the Feynmann-Kac formula, the weak error can be written as
$$d_g(X,X^n) = \left|\mathbb{E}\left(u(1,X_1^n) -u(0,X_0^n)\right)\right|$$
where $u$ is the (unique bounded) solution of a parabolic PDE
\begin{equ}\label{ourpde}
\begin{cases}
\partial_t u(t,x) + Lu(t,x) = 0 &\forall(t,x) \in [0,1)\times \mathbb{R}^d \\
u(1,x) = g(x)  &\forall x \in \mathbb{R}^d
\end{cases}
\end{equ}
where $L$ is the generator of the solution of (\ref{oursde}).
It can be shown that
$$d_g(X,X^n)= \left|\mathbb{E}\int_0^1\left( \bar{L} u \left(r, X_r^n, X_{\kappa_n(r)}^n\right)  - Lu(r,X_r^n)\right)\,dr\right|,$$
where the operator $\bar{L}$ is the ``frozen'' generator associated with our SDE (\ref{oursde}), that is:
$$\bar{L}\phi(t,x, \bar{x}) : = \sum_{i=1}^d b(\bar{x})\partial_{x_i} \phi(x) + \frac{1}{2}\sum_{i,j=1}^d (\sigma\sigma^T)_{ij}(\bar{x})\partial_{x_i x_j}\phi(x).$$
Writing out the operators $L$ and $\bar{L}$ explicitly, and using the triangle inequality gives that
\begin{multline*}
d_g(X^n,X)\lesssim \sup_{i\in \{1,\dots,d\}}\left|\mathbb{E}\int_0^1 \left(\left(b_i(X_{\kappa_n(r)}^n) - b_i(X_r^n)\right)\partial_{x_i}u(r,X_r^n)\right)\,dr\right|\\
+  \sup_{i,j \in \{1,\dots,d\}}\left|\mathbb{E}\int_0^1 \left(\left(\sigma\sigma^T)_{ij}(X_{\kappa_n(r)}^n) - (\sigma\sigma^T)_{ij}(X_r^n)\right)\partial_{x_i x_j}u(r,X_r^n)\right)\,dr\right|.
\end{multline*}
Therefore in order to show that $d_g(X^n,X)$ converges with the desired rate, it suffices to show that the same rate of convergence holds for
\begin{equation}\label{bexpression0}
\left|\mathbb{E}\int_0^1 \left(\left(b_i(X_{\kappa_n(r)}^n) - b_i(X_r^n)\right)\partial_{x_i}u(r,X_r^n)\right)\,dr\right|
\end{equation}
and
\begin{equation}\label{sigmaexpression0}
\left|\mathbb{E}\int_0^1 \left(\left(\sigma\sigma^T)_{ij}(X_{\kappa_n(r)}^n) - (\sigma\sigma^T)_{ij}(X_r^n)\right)\partial_{x_i x_j}u(r,X_r^n)\right)\,dr\right|,
\end{equation}
for all $i,j$ indices. 

This way the proof of the main result is reduced to having to show that the expressions (\ref{bexpression0}) and (\ref{sigmaexpression0}) converge at the desired rate. In order to prove this, in section \ref{usefulestimatessection} we will prove estimates on $u$ and we will state a lemma which will allow us to exploit the regularising property of convolution with the density of the Euler-Maruyama scheme. We will then introduce stochastic sewing in section \ref{sewingsection}, which will be necessary to bound (\ref{bexpression0}). Using these results, in section \ref{quadratureestimatessection} we will prove quadrature estimates on (\ref{bexpression0}) and (\ref{sigmaexpression0}). Finally, in section \ref{proofofmainresultsection} we prove the main result.

\section{Some useful estimates}\label{usefulestimatessection}
In this section, we will introduce some estimates that will be used throughout the proof of the main result.
We will begin by recalling some basic estimates on the parabolic PDE associated with the expectation $\mathbb{E}g(X_1)$ in the Feynmann-Kac framework. This estimate will be derived from a more general, well-known estimate on parabolic PDEs, that we will state below.
Consider the parabolic PDE
\begin{equ}\label{ourpde}
\begin{cases}
\partial_t u(t,x) + Lu(t,x) = 0 &\forall(t,x) \in [0,1)\times \mathbb{R}^d \\
u(1,x) = g(x)  &\forall x \in \mathbb{R}^d
\end{cases}
\end{equ}
where the differential operator $L$ is the infinitesimal generator of $X$, that is, for smooth $\phi:\mathbb{R}^d \to\mathbb{R}$ it is defined as
\begin{equ}
L\phi(x) := \frac{1}{2}\sum_{i,j=1}^d (\sigma(x)\sigma^T(x))_{ij}\partial_{x_i x_j}\phi(x)
+\sum_{i=1}^d b_i(x)\partial_{x_i}\phi(x).
\end{equ}
For the proof of the main result we will need estimates on the solution of (\ref{ourpde}). These can be obtained by using a Schauder-type estimate on the parabolic PDE corresponding to the differential operator $\mathscr{L}$ that is defined on smooth functions $\phi:\mathbb{R} \to \mathbb{R}^d$ by
\begin{equ}\label{parabolicoperator}
\mathscr{L}\phi(x) = \sum_{i,j=1}^da_{ij}(x)\partial_{x_i x_j}\phi(x) + \sum_{i=1}^d b_i(x)\partial_{x_i}\phi(x) + c(x)\phi(x),
\end{equ}
where the coefficients satisfy the following assumption:
\begin{assumption}\label{parabolicassumptions}{\ } \\
\vspace*{-0.8cm}
\begin{enumerate}[(i)] 
\item There exist constants $K>0$ and $\alpha \in (0,1)$ such that for all $i,j \in \{1,\dots,d\}$ we have $\|a_{ij}\|_{C^{\alpha}},\|b_{j}\|_{C^{\alpha}},\|c\|_{C^{\alpha}}<K$
\item for $A(x) = (a_{ij}(x))_{1\leq i,j \leq d}$ there exists a positive constant $\mu$ such that $\langle A(x)\xi,\xi\rangle \geq \mu|\xi|^2$ for all $x,\xi \in \mathbb{R}^d$.
\end{enumerate}
\end{assumption}

Let $\mathcal{C}^{1,2}$ denote the class of bounded continuous functions $u:[0,1]\times \mathbb{R}^d \to \mathbb{R}$, such that $\partial_t u, \nabla u, \nabla^2 u$ exist and are continuous on $(0,1) \times \mathbb{R}^d$.
The following lemma can be found as Proposition 6.4.1 and Theorem 6.4.3 in \cite{lorenzi2021semigroups}.
\begin{lemma}[A Schauder-type estimate for parabolic PDEs]\label{schauder}\label{parabolicestimates}
Suppose that the operator (\ref{parabolicoperator}) satisfies Assumption \ref{parabolicassumptions}.
For each bounded and continuous initial condition $g$, the Cauchy problem
\begin{equ}
\begin{cases}
\partial_t v(t,x) = \mathscr{L}v(t,x),\quad &\forall (t,x) \in(0,1) \times \mathbb{R}^d,\\
v(0,x) = g(x), & \forall x \in \mathbb{R}^d
\end{cases}
\end{equ}
admits a unique solution $v$ in the class $\mathcal{C}^{1,2}$.

Moreover, for each $0\leq \beta\leq\gamma\leq2+\alpha$ there exists two positive constants $N_1$ and $N_2$ depending only on $\beta,\gamma,\alpha, d,K, \lambda$ such that
$$\|v(t,\cdot)\|_{C^\gamma(\mathbb{R}^d) }\leq N_1 t^{ - \frac{\gamma - \beta}{2}}e^{N_2t}\|g\|_{C^\beta(\mathbb{R}^d)}.$$
\end{lemma}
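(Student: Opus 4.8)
The statement is a classical fact of linear parabolic theory, and the plan is to obtain it either through analytic semigroup theory together with an interpolation-space description of Hölder spaces, or, more hands-on, via Levi's parametrix construction of the fundamental solution. I would sketch the latter route, since it makes the time-singularity transparent.

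First I would construct the fundamental solution $\Gamma(t,x;s,y)$ of $\partial_t - \mathscr{L}$ by the parametrix method: set $\Gamma = Z + Z\star\Phi$, where $Z(t,x;s,y)$ is the Gaussian kernel obtained by freezing the coefficients of $\mathscr{L}$ at the point $y$, and $\Phi$ is the solution of the Volterra integral equation whose kernel is $(\mathscr{L}_x - \mathscr{L}_x^{(y)})Z$. Assumption~\ref{parabolicassumptions}(i) makes this kernel weakly singular (of order $t^{-1+\alpha/2}$), so the Neumann series for $\Phi$ converges. Standard estimates then give, for $|l|\le 2$, a bound of the form $|\partial_x^l \Gamma(t,x;0,y)| \lesssim t^{-(d+|l|)/2}\exp(-c|x-y|^2/t)$, together with a corresponding Hölder bound on $\partial_x^2\Gamma(t,\cdot;0,y)$; defining $v(t,x) := \int_{\mathbb{R}^d}\Gamma(t,x;0,y)g(y)\,dy$ then yields a function that is bounded, continuous on $[0,1]\times\mathbb{R}^d$, lies in $\mathcal{C}^{1,2}$ for $t>0$, solves the Cauchy problem, and is the unique such solution by the maximum principle. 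The zeroth-order coefficient $c$ contributes the $e^{N_2 t}$ factor, which can equivalently be produced at the outset by the spectral shift $\mathscr{L}\rightsquigarrow\mathscr{L}-\|c\|_{C^0}$.

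Next I would prove the smoothing bound. For the flat case $\mathscr{L}=\Delta$ it is immediate: $\|\partial^l p_t\|_{L^1}\lesssim t^{-|l|/2}$ for the heat kernel $p_t$, so convolution with $p_t$ maps $C^\beta$ into $C^\gamma$ with norm $\lesssim t^{-(\gamma-\beta)/2}$ for every $0\le\beta\le\gamma$ — for integer orders by differentiating the convolution, for negative and fractional orders directly from the heat-kernel characterisation of $\|\cdot\|_{C^\alpha}$ recalled in Section~1.4. For variable coefficients one repeats this with $\Gamma$ in place of $p_t$: differentiating $v$ under the integral and using the kernel bounds above gives the estimate for $\gamma,\beta\in\{0,1,2\}$, the case $\gamma=2+\alpha$ uses the Hölder bound on $\partial_x^2\Gamma$, and all intermediate (non-integer) values of $\beta,\gamma$ follow by real interpolation between adjacent endpoint cases. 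Alternatively, one notes that $\mathscr{L}$ generates an analytic semigroup $T(t)$ on $C_b(\mathbb{R}^d)$, that $C^\theta \cong D_{\mathscr{L}}(\theta/2,\infty)$ as interpolation spaces (the stationary Schauder estimate, with the usual modifications at integers), and invokes the general fact $\|T(t)\|_{\mathcal{L}(D_{\mathscr{L}}(\beta/2,\infty),\,D_{\mathscr{L}}(\gamma/2,\infty))}\lesssim t^{-(\gamma-\beta)/2}$.

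The main obstacle is the sharp top endpoint: controlling $[\partial_x^2 v(t,\cdot)]_{C^\alpha}$ with the precise singularity $t^{-(2+\alpha-\beta)/2}$ requires the full parabolic Schauder estimate — Hölder continuity in $x$ of the second spatial derivatives of $\Gamma$, uniformly in the freezing point $y$ — which is exactly where the $C^\alpha$-regularity of the coefficients is essential and where a naive parametrix bound would lose a power of $t$. Tracking every constant of the stated form ($N_1,N_2$ depending only on $\beta,\gamma,\alpha,d,K,\lambda$) through the interpolation step is the remaining bookkeeping. Since all of this is carried out in detail in \cite{lorenzi2021semigroups} (Proposition 6.4.1 and Theorem 6.4.3), in the paper we simply invoke it.
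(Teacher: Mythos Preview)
Your proposal is correct and aligns with the paper: the paper does not prove this lemma at all but simply cites Proposition~6.4.1 and Theorem~6.4.3 of \cite{lorenzi2021semigroups}, exactly as you conclude. Your parametrix/semigroup sketch is a helpful elaboration of what lies behind that citation, but it goes beyond what the paper itself provides.
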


The above lemma can be directly applied to the time reversal of the solution of (\ref{ourpde}) to obtain the following estimate.

\begin{corollary}[A Schauder-type estimate for $u$]\label{uestimates}
Let $u$ be the unique bounded solution of (\ref{ourpde}). Then for each $0\leq \beta\leq\gamma\leq2+\alpha$  there exists two positive constants $N_1$ and $N_2$ depending only on $\| b\|_{C^\alpha}, \|\sigma\|_{C^\alpha}, \alpha, d, \lambda$  such that for all $t \in (0,1)$ we have
\begin{equ}
\|u(t,\cdot)\|_{C^\gamma(\mathbb{R}^d)}\leq N_1(1-t)^{-\frac{\gamma - \beta}{2}}e^{N_2(1-t)}\|g\|_{C^\beta(\mathbb{R}^d)}.
\end{equ}
\end{corollary}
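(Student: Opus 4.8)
The plan is to deduce \Cref{uestimates} from \Cref{parabolicestimates} by a straightforward change of variables reversing time. First I would set $v(t,x) := u(1-t,x)$ for $t \in [0,1]$ and $x \in \mathbb{R}^d$. Since $u$ solves the backward PDE \eqref{ourpde} with $\partial_t u + Lu = 0$ on $[0,1) \times \mathbb{R}^d$ and $u(1,\cdot) = g$, the function $v$ satisfies $\partial_t v(t,x) = -\partial_t u(1-t,x) = Lu(1-t,x) = Lv(t,x)$ on $(0,1) \times \mathbb{R}^d$, together with the initial condition $v(0,x) = u(1,x) = g(x)$. Thus $v$ solves the forward Cauchy problem of \Cref{parabolicestimates} with the operator $\mathscr{L} = L$, i.e. with coefficients $a_{ij} = \tfrac12 (\sigma\sigma^T)_{ij}$, the same $b_i$ as in \eqref{oursde}, and $c \equiv 0$.

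Next I would check that this operator $L$ verifies \Cref{parabolicassumptions}. For part (i): since $\sigma \in C^2$ with bounded second derivatives, $\sigma$ and $\sigma^T$ are bounded with bounded Lipschitz first derivatives, so each entry $(\sigma\sigma^T)_{ij}$ is bounded with bounded Lipschitz (hence $C^\alpha$) first derivatives; thus $\|a_{ij}\|_{C^\alpha} \le \tfrac12\|\sigma\sigma^T\|_{C^{1}} \lesssim \|\sigma\|_{C^2}^2$. The drift satisfies $\|b_i\|_{C^\alpha} \le \|b\|_{C^\alpha}$ by hypothesis, and $\|c\|_{C^\alpha} = 0$. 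So (i) holds with $K$ a constant depending only on $\|b\|_{C^\alpha}$ and $\|\sigma\|_{C^2}$ (and we note $\|\sigma\|_{C^\alpha} \le \|\sigma\|_{C^2}$, matching the stated dependence). For part (ii): $A(x) = \tfrac12 \sigma(x)\sigma^T(x) \ge \tfrac{\lambda}{2} I$ by the uniform ellipticity assumption, so $\langle A(x)\xi,\xi\rangle \ge \tfrac{\lambda}{2}|\xi|^2$, giving $\mu = \lambda/2$. Hence the uniqueness statement of \Cref{parabolicestimates} applies; since $u$ is the unique bounded solution of \eqref{ourpde}, $v$ is the corresponding unique $\mathcal{C}^{1,2}$ solution, and there is no inconsistency in the identification.

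Finally I would invoke the Schauder bound of \Cref{parabolicestimates}: for $0 \le \beta \le \gamma \le 2+\alpha$,
\[
\|v(t,\cdot)\|_{C^\gamma(\mathbb{R}^d)} \le N_1 t^{-\frac{\gamma-\beta}{2}} e^{N_2 t} \|g\|_{C^\beta(\mathbb{R}^d)},
\]
with $N_1, N_2$ depending only on $\beta, \gamma, \alpha, d, K, \mu$, hence — after substituting the values of $K$ and $\mu$ above — only on $\|b\|_{C^\alpha}, \|\sigma\|_{C^\alpha}, \alpha, d, \lambda$. Undoing the substitution, $\|u(t,\cdot)\|_{C^\gamma} = \|v(1-t,\cdot)\|_{C^\gamma} \le N_1 (1-t)^{-\frac{\gamma-\beta}{2}} e^{N_2(1-t)}\|g\|_{C^\beta}$ for $t \in (0,1)$, which is exactly the claimed estimate.

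I do not expect any serious obstacle here; the only points needing a little care are the bookkeeping that the constants $K$ and $\mu$ from \Cref{parabolicassumptions} can indeed be chosen depending only on the quantities listed in the corollary (in particular absorbing the $\|\sigma\|_{C^2}^2$ from the product $\sigma\sigma^T$ into a constant depending on $\|\sigma\|_{C^\alpha}$ is a slight abuse that should be acknowledged, or else the dependence list in the corollary should read $\|\sigma\|_{C^2}$), and confirming that the forward problem's unique $\mathcal{C}^{1,2}$ solution coincides with the time-reversal of the unique bounded solution of \eqref{ourpde} rather than some other object — this follows since $v \in \mathcal{C}^{1,2}$ is in particular bounded and continuous, so both uniqueness statements pin down the same function.
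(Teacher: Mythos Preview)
Your proposal is correct and follows exactly the approach indicated in the paper, which simply states that \Cref{parabolicestimates} ``can be directly applied to the time reversal of the solution of (\ref{ourpde})'' without spelling out details. Your verification that the time-reversed function $v(t,x)=u(1-t,x)$ solves the forward Cauchy problem with $\mathscr{L}=L$ satisfying \Cref{parabolicassumptions}, together with your remark about the $\|\sigma\|_{C^\alpha}$ versus $\|\sigma\|_{C^2}$ dependence, supplies precisely the bookkeeping the paper omits.
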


The following lemma (which has been proven via Malliavin-calculus in \cite{butkovsky2021approximation}) will be used in order to exploit the regularising property of convolution with the density of the Euler-Maruyama scheme.
Consider the driftless scheme
\begin{equ}\label{driftless}
\bar{X}_t^n = \sigma(\bar{X}_{\kappa_n(t)}^n)dW_t, \quad \bar{X}_0^n = y \in \mathbb{R}^d.
\end{equ}
\begin{lemma}\label{G'ofdriftless}
Let $\sigma \in C^2$, with $\sigma\sigma^T \geq \lambda I$ for some constant $\lambda>0$, let $\bar{X}^n$ be given by (\ref{driftless}) and let $G \in C^1$. Then for all $t =1/n, 2/n, \dots 2$ and $k=1,\dots d$ one has the bound
\begin{equ}
|\mathbb{E}\partial_{x_k}G(\bar{X}_t^n)|\leq N\|G\|_{C^0} t^{-1/2} + N \|G\|_{C^1} e^{-cn}
\end{equ}
with some constants $N = N(d,\lambda, \|\sigma\|_{C^2})$ and $c = c(d,\|\sigma\|_{C^2})>0$.
\end{lemma}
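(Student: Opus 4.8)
The plan is to get the bound from a finite‑dimensional Gaussian integration by parts performed on the last $\approx nt/2$ Brownian increments, after discarding an exponentially unlikely event. Write $t = M/n$, put $s_0 := \lfloor M/2\rfloor/n$ and $m := M - \lfloor M/2\rfloor$, so that $s_0$ is a grid point, $1 \le m \le M$ and $t - s_0 = m/n \in [t/2,\,t]$. Conditioning on $\mathscr F_{s_0}$, the anchor value $\bar X^n_{s_0}$ becomes a constant and, iterating the recursion behind \eqref{driftless}, one may write $\bar X^n_t = F(\xi)$, where $\xi = (\xi^{(1)},\dots,\xi^{(m)})$ with $\xi^{(l)} := W_{s_0+l/n} - W_{s_0+(l-1)/n}$ are i.i.d.\ $\mathcal N(0,n^{-1}I_d)$, independent of $\mathscr F_{s_0}$, and $F\colon\mathbb R^{md}\to\mathbb R^d$ is $C^2$; this is what the hypothesis $\sigma\in C^2$ provides, along with $F\in\mathbb D^{2,p}$ for every $p$. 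Since $\sigma\sigma^T\ge\lambda I$, the last block $\partial_{\xi^{(m)}}F = \sigma(\bar X^n_{s_0+(m-1)/n})$ of the Jacobian has full rank $d$, so $F$ is non‑degenerate in the Malliavin sense and the conditional integration‑by‑parts formula gives, for each $k$,
\[
\mathbb E[\partial_{x_k}G(\bar X^n_t) \mid \mathscr F_{s_0}] = \mathbb E[G(\bar X^n_t)\,H_k \mid \mathscr F_{s_0}],
\]
where $H_k$ is the usual weight assembled from the reduced Malliavin matrix $\gamma := \tfrac1n\sum_{l=1}^{m}(\partial_{\xi^{(l)}}F)(\partial_{\xi^{(l)}}F)^T$, its inverse, the first two derivatives of $F$, and the Gaussian score of $\xi$. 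Fix a small constant $\delta = \delta(\|\sigma\|_{C^2})$ and set $B := \{\,|\xi^{(l)}| > \delta\text{ for some }l \le m\,\}$; then $\P(B) \le m\,C_d\,e^{-\delta^2 n/2} \le e^{-cn}$. On $B$ we do not exploit the gained regularity: since $\|\gamma^{-1}\| \le n/\lambda$ almost surely (again from the last block), $\|H_k\|_{L^2}$ is bounded by a polynomial in $n$, so $|\mathbb E[G(\bar X^n_t)H_k\mathbf 1_B]| \le \|G\|_{C^0}\|H_k\|_{L^2}\P(B)^{1/2} \lesssim \|G\|_{C^1}e^{-cn}$, which is the second term of the claim.

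\textbf{The main term.} It remains to show $\mathbb E[\,|H_k|^2\mathbf 1_{B^c}\,] \lesssim t^{-1}$, for then $|\mathbb E[G(\bar X^n_t)H_k\mathbf 1_{B^c}]| \le \|G\|_{C^0}\|H_k\mathbf 1_{B^c}\|_{L^2} \lesssim \|G\|_{C^0}t^{-1/2}$, and adding the two contributions proves the lemma. One has $\partial_{\xi^{(l)}}F = \Theta_l\,\sigma(\bar X^n_{s_0+(l-1)/n})$, where $\Theta_l$ is the discrete tangent flow of the scheme from step $l$ to step $m$, i.e.\ a product of matrices of the form $I + \partial\sigma(\bar X^n_\cdot)\,\xi^{(\cdot)}$. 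On $B^c$ every increment is bounded by $\delta$, so each such factor is invertible with the norm of its inverse at most $2$; then both $\Theta_l$ and $\Theta_l^{-1}$ — which on $B^c$ obey Euler‑type recursions for a linear SDE with bounded coefficients — have $p$‑th moments bounded uniformly in $n$ and $l$, and so does $\max_l\|\Theta_l^{-1}\|$, by a Doob maximal inequality. The restriction to $B^c$ is essential, since a single factor $I + \partial\sigma\,\xi$ has no finite negative moments. Using $\sigma\sigma^T \ge \lambda I$ and superadditivity of $\lambda_{\min}$ on positive semidefinite matrices,
\[
\lambda_{\min}(\gamma) \;\ge\; \frac{\lambda}{n}\sum_{l=1}^{m}\|\Theta_l^{-1}\|^{-2} \;\ge\; \frac{\lambda\,m}{n\,\max_l\|\Theta_l^{-1}\|^{2}} \;\gtrsim\; \frac{\lambda\,t}{\max_l\|\Theta_l^{-1}\|^{2}},
\]
whence $\mathbb E[\|\gamma^{-1}\|^p\mathbf 1_{B^c}] \lesssim t^{-p}$. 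Combined with the (polynomial in $n$) moment bounds for the first two derivatives of $F$ on $B^c$ and substituted into the explicit form of $H_k$, the powers of $n$ cancel and one is left with $\|H_k\mathbf 1_{B^c}\|_{L^2} \lesssim t^{-1/2}$.

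\textbf{Main obstacle.} The crux is the lower bound $\gamma \gtrsim t\,I$ on $B^c$: here the ellipticity $\sigma\sigma^T \ge \lambda I$ must be combined with moment control, uniform in $n$, of the discrete tangent flow and of its running maximum, and it is this step that forces an exceptional event into the statement — without truncating the increments the inverse tangent matrices are not integrable, so some event must be discarded, and the cleanest choice costs the $e^{-cn}$ term. The remaining work — the combinatorial moment bound for the second derivatives of $F$ and the book‑keeping of the powers of $n$ in $H_k$ — is routine; $\sigma\in C^2$ is used precisely to make $F$ twice differentiable and $H_k$ a well‑defined, square‑integrable weight.
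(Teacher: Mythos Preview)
The paper does not give its own proof of this lemma: it is quoted from \cite{butkovsky2021approximation} with the sole remark that the argument there is ``via Malliavin calculus''. Your proposal --- finite-dimensional Gaussian integration by parts on the Brownian increments of the driftless scheme, combined with a truncation event to control the inverse of the discrete tangent flow --- is exactly that approach, so there is nothing in the present paper to compare against. Structurally your sketch is sound: the global integration-by-parts is justified because $\gamma \ge n^{-1}\lambda I$ almost surely (from the last block alone) and because $\partial_{\xi^{(l)}}F = V^{(l)}_m$ satisfies a forward martingale-type recursion giving $F\in\mathbb D^{2,p}$ for every $p$; the split into $B$ and $B^c$ is then just a decomposition of the resulting expectation, and $\|H_k\|_{L^2}$ is indeed polynomial in $n$ so that the $B$-contribution is absorbed by $e^{-cn}$.

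One comment: the conditioning at $s_0 \approx t/2$ is not needed here, since the initial point $y$ is deterministic --- taking $s_0=0$, $m=M$ works identically. More substantively, the sentence ``the powers of $n$ cancel and one is left with $\|H_k\mathbf 1_{B^c}\|_{L^2}\lesssim t^{-1/2}$'' hides most of the actual work. Writing $H_k = \sum_j\gamma^{-1}_{kj}\langle \nabla_\xi F_j,\xi\rangle - n^{-1}\operatorname{div}_\xi(\gamma^{-1}_{k\cdot}\nabla_\xi F)$, the first (Skorokhod) term needs $\|\langle\nabla_\xi F,\xi\rangle\|_{L^p}\lesssim \sqrt t$ on $B^c$, which is not immediate since $\Theta_l$ is anticipating; one has to rewrite $\sum_l \Theta_l\sigma(Y_{l-1})\xi^{(l)} = J_m\sum_l J_l^{-1}\sigma(Y_{l-1})\xi^{(l)}$, expand $(I+B_l)^{-1}$, and separate a genuine martingale sum of size $\sqrt t$ from a drift of size $t$. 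The divergence term likewise requires uniform-in-$n$ moment bounds on $\partial^2_{\xi^{(l)}\xi^{(l')}}F$ and on $\nabla_\xi\gamma^{-1}$, not just the headline bound $\|\gamma^{-1}\|\lesssim t^{-1}$. All of this goes through on $B^c$ with the ingredients you have assembled, but it is the heart of the proof rather than routine book-keeping.
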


\section{Sewing}\label{sewingsection}

For $0\leq S \leq T\leq 1$ we define the set $[S,T]_{\leq} := \{ (s,t) : S\leq s\leq t\leq T\}$. Moreover for  a function $A_{\cdot,\cdot}$ defined on $[S,T]_{\leq}$ and for times $s\leq u\leq t$ we set $\delta A_{s,u,t} := A_{s,t} - A_{s,u} - A_{u,t}$.

Lê's stochastic sewing lemma which has been proven in \cite{le2020stochastic} is as follows:

\begin{lemma}[Stochastic sewing lemma]\label{sewinglemma}
Let $p\geq0$, $0\leq S\leq T\leq 1$ and let $A_{.,.}$ be a function $[S,T]_\leq \rightarrow L_p(\Omega, \mathbb{R}^d)$ such that for any $(s,t) \in [S,T]_\leq$ the random vector $A_{s,t}$ is $\mathscr{F}_t$-measurable. Suppose that for some $\varepsilon_1, \varepsilon_2 >0$ and $C_1,C_2$ the bounds
\begin{itemize}
\item[\normalfont{(S1)}] $\|A_{s,t}\|_{L_p(\Omega)}\leq C_1 |t-s|^{1/2 + \varepsilon_1}$
\item[\normalfont{(S2)}] $\|\mathbb{E}^s \delta A_{s,u,t}\|_{L_p(\Omega)}\leq C_2 |t-s|^{1+\varepsilon_2}$
\end{itemize}
hold for all $S\leq s\leq u\leq t\leq T$. Then there exists a unique map $\mathcal{A}:[S,T]\rightarrow L_p(\Omega, \mathbb{R}^d)$ such that $\mathcal{A}_S =0$, $\mathcal{A}_t$ is $\mathscr{F}_t$-measurable for all $t \in [S,T]$, and the following bounds hold for some constants $K_1,K_2>0$:
\begin{itemize}
\item[\normalfont{(S3)}] $\|\mathcal{A}_t - \mathcal{A}_s - A_{s,t}\|_{L_p(\Omega)} \leq K_1 |t-s|^{1/2 + \varepsilon_1}, \quad  \forall (s,t) \in [S,T]_{\leq}$
\item[\normalfont{(S4)}] $\| \mathbb{E}^s(\mathcal{A}_t  - \mathcal{A}_s - A_{s,t})\|_{L_p(\Omega)}\leq K_2|t-s|^{1+\varepsilon_2}, \quad \forall (s,t) \in [S,T]_{\leq}.$
\end{itemize}
Moreover, there exists a constant $K$ depending only of $\varepsilon_1, \varepsilon_2, d, p$ such that $\mathcal{A}$ satisfies the bound
$$\|\mathcal{A}_t - \mathcal{A}_s\|_{L_p(\Omega)}\leq KC_1 |t-s|^{1/2 +\varepsilon_1} + KC_2|t-s|^{1+\varepsilon_2}, \quad \forall (s,t) \in [S,T]_{\leq}.$$
\end{lemma}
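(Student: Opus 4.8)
The plan is to construct $\mathcal{A}$ as an $L_p$-limit of Riemann-type sums over dyadic partitions, following L\^e. Fix $(s,t)\in[S,T]_{\leq}$; for $n\geq0$ put $t^n_i:=s+i\,2^{-n}(t-s)$ ($0\leq i\leq 2^n$), $m^n_i:=\tfrac12(t^n_i+t^n_{i+1})$, and $A^n_{s,t}:=\sum_{i=0}^{2^n-1}A_{t^n_i,t^n_{i+1}}$. Refining the partition once gives the telescoping identity
\begin{equ}
A^{n+1}_{s,t}-A^n_{s,t}=-\sum_{i=0}^{2^n-1}\delta A_{t^n_i,m^n_i,t^n_{i+1}},
\end{equ}
and I would decompose each summand as $\delta A_{t^n_i,m^n_i,t^n_{i+1}}=\mathbb{E}^{t^n_i}\delta A_{t^n_i,m^n_i,t^n_{i+1}}+\xi^n_i$, the remainder being $\xi^n_i$. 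For the conditional-expectation part the triangle inequality in $L_p$ and (S2) give a bound $C_2\,2^n(2^{-n}|t-s|)^{1+\varepsilon_2}=C_2|t-s|^{1+\varepsilon_2}2^{-n\varepsilon_2}$. For the remainders, $\mathbb{E}^{t^n_i}\xi^n_i=0$ and $\xi^n_i$ is $\mathscr{F}_{t^n_{i+1}}$-measurable, so $(\xi^n_i)_i$ is a martingale difference sequence for $(\mathscr{F}_{t^n_{i+1}})_i$; the Burkholder--Davis--Gundy inequality (which for $p\in[2,\infty)$ gives $\|\sum_i\xi^n_i\|_{L_p}^2\lesssim\sum_i\|\xi^n_i\|_{L_p}^2$) together with (S1) applied three times to bound $\|\delta A_{t^n_i,m^n_i,t^n_{i+1}}\|_{L_p}\lesssim C_1(2^{-n}|t-s|)^{1/2+\varepsilon_1}$ yields $\|\sum_i\xi^n_i\|_{L_p}\lesssim C_1|t-s|^{1/2+\varepsilon_1}2^{-n\varepsilon_1}$. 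As $\varepsilon_1,\varepsilon_2>0$, both bounds are summable in $n$, so $(A^n_{s,t})_n$ is Cauchy in $L_p$; I denote its limit $\mathcal{A}_{s,t}$ and set $\mathcal{A}_t:=\mathcal{A}_{S,t}$, $\mathcal{A}_S:=0$.

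Each $\mathcal{A}_t$ is $\mathscr{F}_t$-measurable as an $L_p$-limit of $\mathscr{F}_t$-measurable random variables, and a standard argument (comparing two partition sequences via a common refinement, reusing the estimate above) shows the limit is independent of the approximating sequence, whence $\mathcal{A}_{s,t}=\mathcal{A}_{s,u}+\mathcal{A}_{u,t}$ and thus $\mathcal{A}_{s,t}=\mathcal{A}_t-\mathcal{A}_s$. For (S3) and the final bound I would telescope from the trivial partition: $\mathcal{A}_t-\mathcal{A}_s-A_{s,t}=\sum_{n\geq0}(A^{n+1}_{s,t}-A^n_{s,t})$, so summing the two bounds above gives $\|\mathcal{A}_t-\mathcal{A}_s-A_{s,t}\|_{L_p}\lesssim C_1|t-s|^{1/2+\varepsilon_1}+C_2|t-s|^{1+\varepsilon_2}$, which (using $|t-s|\leq1$) yields (S3) and, combined with (S1), the asserted bound on $\|\mathcal{A}_t-\mathcal{A}_s\|_{L_p}$. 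For (S4), applying $\mathbb{E}^s$ to the telescoping identity and using $s=t^n_0\leq t^n_i$ with the tower property gives $\mathbb{E}^s(A^{n+1}_{s,t}-A^n_{s,t})=-\sum_i\mathbb{E}^s[\mathbb{E}^{t^n_i}\delta A_{t^n_i,m^n_i,t^n_{i+1}}]$; since conditional expectation is an $L_p$-contraction, (S2) bounds this by $C_2|t-s|^{1+\varepsilon_2}2^{-n\varepsilon_2}$, and summing over $n$ gives (S4).

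For uniqueness, let $\mathcal{A}$ and $\widetilde{\mathcal{A}}$ both satisfy the conclusions, put $\mathcal{D}_t:=\mathcal{A}_t-\widetilde{\mathcal{A}}_t$ and $B_{s,t}:=\mathcal{D}_t-\mathcal{D}_s$; then $B$ is additive, so $\delta B_{s,u,t}=0$. Subtracting (S3), resp.\ (S4), for the two maps gives $\|B_{s,t}\|_{L_p}\lesssim|t-s|^{1/2+\varepsilon_1}$, resp.\ $\|\mathbb{E}^sB_{s,t}\|_{L_p}\lesssim|t-s|^{1+\varepsilon_2}$, for all $(s,t)\in[S,T]_{\leq}$. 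Writing $\mathcal{D}_t-\mathcal{D}_s=\sum_{i=0}^{2^n-1}B_{t^n_i,t^n_{i+1}}$ by additivity and splitting each $B_{t^n_i,t^n_{i+1}}$ into $\mathbb{E}^{t^n_i}B_{t^n_i,t^n_{i+1}}$ plus a martingale difference, the two estimates just obtained give $\|\mathcal{D}_t-\mathcal{D}_s\|_{L_p}\lesssim 2^{-n\varepsilon_1}+2^{-n\varepsilon_2}\to0$ as $n\to\infty$; taking $s=S$ and using $\mathcal{D}_S=0$ shows $\mathcal{D}\equiv0$.

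The heart of the proof — and the feature distinguishing stochastic from deterministic sewing — is the remainder estimate: once the conditional expectations are subtracted, the $\xi^n_i$ form a martingale difference sequence ($\xi^n_i\in\mathscr{F}_{t^n_{i+1}}$, $\mathbb{E}^{t^n_i}\xi^n_i=0$, $t^n_i<t^n_{i+1}$), so $\|\sum_i\xi^n_i\|_{L_p}$ grows like the square root of the number of terms rather than linearly, which is precisely what converts the sub-critical exponent $\tfrac12+\varepsilon_1$ of (S1) (for which deterministic sewing fails) into the summable factor $2^{-n\varepsilon_1}$. Getting the filtration bookkeeping right so that this martingale structure is available is the main point requiring care; the accompanying check that the limit is partition-independent (hence additive) is routine, as are the remaining estimates, which are direct applications of (S1), (S2), the triangle inequality, and Burkholder--Davis--Gundy.
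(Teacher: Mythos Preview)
The paper does not prove this lemma; it merely states it and attributes the result to L\^e \cite{le2020stochastic}. Your sketch is essentially the standard argument from that reference --- dyadic refinement, splitting each $\delta A$ into its conditional expectation (handled by (S2) and the triangle inequality) plus a martingale-difference remainder (handled by BDG and (S1)), then summing the resulting geometric series --- and it is correct. One small remark: the BDG square-function estimate you invoke requires $p\geq 2$, so the hypothesis ``$p\geq 0$'' in the statement is presumably a typo for $p\geq 2$; your parenthetical already flags this, and indeed in the paper's only application (Lemma~\ref{bquadrature}) the reduction to $p\geq 2$ is made explicitly.
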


\section{Quadrature estimates}\label{quadratureestimatessection}

We will begin by introducing some notation.
Let $\bar{X}^n$ be the driftless scheme given by (\ref{driftless}), and let $u$ be the solution of the parabolic PDE (\ref{ourpde}). Fix some $i \in \{1,\dots,d\}$
and for convenience, let us denote 
$$f_t := \partial_{x_i} u(t, \cdot).$$
Let furthermore $p_t:\mathbb{R}^d \to \mathbb{R}$ denote the standard heat kernel given by
\begin{equ}\label{heatkernel}
p_t(x) := \frac{1}{(2\pi t)^{d/2}}\exp\left(-\frac{|x|^2}{2t}\right)
\end{equ}
for $x \in \mathbb{R}^d$ and $t> 0$, and by $p_0(x)$ we denote the Dirac-delta. 

More generally, for a positive semi-definite matrix $M \in \mathbb{R}^{d\times d}$, we denote by $p_{M}:\mathbb{R}^d \to \mathbb{R}$ the Gaussian density given by
\begin{equ}
p_{M}(x) := \frac{1}{(\det(2\pi M))^{1/2}}\exp\left( -\frac{1}{2} x^T M^{-1} x \right)
\end{equ}
which coincides with $p_t$ whenever $M = t I$ where $I$ denotes the identity matrix.

We will now give an outline of this section. Recall that according to the reduction sketched in section \ref{reductionsection}, in order to prove the main result, we will need to show that
the expressions
\begin{equation}\label{bexpression1}
\left|\mathbb{E}\int_0^1 \left(\left(b_i(X_{\kappa_n(r)}^n) - b_i(X_r^n)\right)\partial_{x_i}u(r,X_r^n)\right)\,dr\right|
\end{equation}
and
\begin{equation}\label{sigmaexpression1}
\left|\mathbb{E}\int_0^1 \left(\left(\sigma\sigma^T)_{ij}(X_{\kappa_n(r)}^n) - (\sigma\sigma^T)_{ij}(X_r^n)\right)\partial_{x_i x_j}u(r,X_r^n)\right)\,dr\right|
\end{equation}
converge at the desired rate of almost $(1+\alpha)/2$ for all $i,j$ indices. 

Due to the poor regularity of the drift $b$, the main challange of the proof is bounding (\ref{bexpression1}). 
Note that doing this by using only the regularity of $b$, we would obtain a bound that deteriorates for small $\alpha$. We will thus need to also exploit the regularising property of convolution with the density of the Euler-Maruyama scheme
\footnote{For intuition, we may consider this to be analogous to using the regularity of the functional 
$x \mapsto \int_0^1 b(x+W_r)dr,$
rather than that of the function $b$.}.

Note however that the bound of Lemma \ref{G'ofdriftless} only holds when the drift coefficient of the Euler-Maruyama scheme is zero. Hence we will begin by showing that the desired bound holds when in (\ref{bexpression1}), the Euler-Maruyama process $X^n$ is replaced with the driftless scheme $\bar{X}^n$ (which is given by (\ref{driftless})) and the integral is taken over $[s,t]$ where $0\leq s\leq t\leq T <1$. (The latter condition needs to be introduced due to the blowup of $f_r = \partial_{x_i} u(t, \cdot)$ in time, as described in the Schauder-estimate (\ref{uestimates}).)

Note that the bound on (\ref{bexpression1}) with $\bar{X}^n$ in place of $X^n$ needs to be shown in the $L_p$-norm, so that it will imply (via a Girsanov-type transformation of drift argument) that the same bound holds when later the drift is re-introduced, i.e when $\bar{X}^n$ is replaced with $X^n$. In order to prove our $L_p$-bound on the expression featuring $\bar{X}^n$, we employ stochastic sewing in the spirit of the proofs of Lemma 6.1 in \cite{butkovsky2021approximation} and Lemma 3.3 in \cite{dareiotis2021quantifying}.

\begin{lemma}\label{bquadrature}
 For $p\geq1$, $\varepsilon>0$.  For all $n \in \mathbb{N}$,  $(s,t) \in [0,T]_{\leq}$, we have
\begin{multline*}
\left\|\int_s^t (b_i(\bar{X}_r^n)  - b_i(\bar{X}_{\kappa_n(r)}^n))f_r(\bar{X}_r^n)\,dr\right\|_{L_p} \leq N n^{-\frac{1+\alpha}{2} + \varepsilon}\Big(|1-T|^{-1/2}|t-s|^{1/2+\varepsilon} \\
+ |1-T|^{-1}|t-s|^{1+\varepsilon}\Big)
\end{multline*}
for some constant $N$ depending only on $d, \alpha,p, \varepsilon, \lambda, \|b\|_{C^\alpha}, \|\sigma\|_{C^2}, \|g\|_{C^0}$.
\end{lemma}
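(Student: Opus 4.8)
The plan is to apply the stochastic sewing lemma (Lemma~\ref{sewinglemma}) to a suitably chosen germ. For $(s,t) \in [0,T]_\leq$ define
\begin{equ}
A_{s,t} := \mathbb{E}^s \int_s^t \big(b_i(\bar X_r^n) - b_i(\bar X_{\kappa_n(r)}^n)\big) f_{r\wedge t}(\bar X_r^n)\, dr,
\end{equ}
or a variant where the argument of $f$ is frozen at the left endpoint of the relevant discretisation interval; the point is that $A_{s,t}$ is $\mathscr F_t$-measurable and, up to sewing errors, its increments reconstruct the integral we wish to bound. I would first record the decomposition of the desired quantity as $\mathcal A_t - \mathcal A_s$ plus a telescoping sum controlled by (S3), so that the final bound follows from the last displayed inequality of Lemma~\ref{sewinglemma} once (S1) and (S2) are verified.

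The verification of (S1) is the routine part: on a single time-interval of length $|t-s|$, one writes the integrand as $(b_i(\bar X_r^n) - b_i(\bar X_{\kappa_n(r)}^n))$ times a bounded factor (here I use $\|f_r\|_{C^0} = \|\partial_{x_i} u(r,\cdot)\|_{C^0} \lesssim |1-r|^{-1/2}\|g\|_{C^0} \le |1-T|^{-1/2}\|g\|_{C^0}$ from Corollary~\ref{uestimates} with $\beta = \gamma$ suitably chosen, actually $\beta=0,\gamma=1$ giving the $|1-T|^{-1/2}$), and uses that $\|b_i(\bar X_r^n) - b_i(\bar X_{\kappa_n(r)}^n)\|_{L_p} \lesssim \|b\|_{C^\alpha}\,\mathbb{E}|\bar X_r^n - \bar X_{\kappa_n(r)}^n|^{\alpha p})^{1/p} \lesssim n^{-\alpha/2}$ since over one step the driftless scheme moves like $\sqrt{r - \kappa_n(r)} \le n^{-1/2}$. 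Integrating over $[s,t]$ costs an extra factor $|t-s|$, so $\|A_{s,t}\|_{L_p} \lesssim n^{-\alpha/2} |1-T|^{-1/2} |t-s|$. This is of the form $C_1|t-s|^{1/2+\varepsilon_1}$ with $C_1 \sim n^{-\alpha/2}|1-T|^{-1/2}$ and $\varepsilon_1 = 1/2 - \varepsilon$; note this is already better than needed and contributes only an $n^{-\alpha/2}$-type term, which is why (S1) alone is not enough and the real gain must come from (S2).

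The main obstacle is the verification of (S2), i.e. bounding $\|\mathbb{E}^s \delta A_{s,u,t}\|_{L_p}$. This is where the regularising effect of the law of the driftless Euler scheme (Lemma~\ref{G'ofdriftless}) has to be brought in. The quantity $\delta A_{s,u,t}$ reduces, after conditioning on $\mathscr F_u$, to an expression of the form $\mathbb{E}^s \mathbb{E}^u \int_u^t (b_i(\bar X_r^n) - b_i(\bar X_{\kappa_n(r)}^n)) f_r(\bar X_r^n)\,dr$ with the $f$-argument now being the "wrong" one relative to the freezing at $s$; the difference $f_r(\bar X_r^n) - f_r(\bar X_{\text{(some earlier point)}}^n)$ is what survives. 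One then conditions further on the value of $\bar X^n$ at time $u$ (or $\kappa_n(u)$) and writes the inner conditional expectation as an integration against the transition density of the driftless scheme started from that point; applying Lemma~\ref{G'ofdriftless} to the function $G = $ (the relevant product of a difference of $b_i$'s and $f_r$, or rather $b_i$ composed appropriately) lets one integrate by parts in space and pick up a factor $(r-u)^{-1/2}$ together with $\|b\|_{C^\alpha}$ from the Hölder modulus of $b$ evaluated against a displacement of size $n^{-1/2}$; combined with the Hölder regularity of $f_r = \partial_{x_i}u(r,\cdot)$, which by Corollary~\ref{uestimates} with $\gamma = 1+\alpha$, $\beta = 0$ satisfies $[f_r]_{C^\alpha} \lesssim |1-r|^{-(1+\alpha)/2}\|g\|_{C^0}$, one obtains an integrand bounded by something like $n^{-(1+\alpha)/2} |1-T|^{-(1+\alpha)/2} (r-u)^{-1/2}$. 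Integrating $(r-u)^{-1/2}$ over $[u,t]$ gives $|t-u|^{1/2} \le |t-s|^{1/2}$, and a further factor $|t-s|^{1/2}$ comes from the length of the outer interval (or from a second application of the one-step bound), so altogether $\|\mathbb{E}^s\delta A_{s,u,t}\|_{L_p} \lesssim n^{-(1+\alpha)/2+\varepsilon}|1-T|^{-1}|t-s|^{1+\varepsilon}$, which is (S2) with $C_2 \sim n^{-(1+\alpha)/2+\varepsilon}|1-T|^{-1}$. The delicate points I expect to fight with are: (a) keeping track of which spatial point each occurrence of $b_i$ and $f_r$ is evaluated at, so that exactly one factor of "$b_i(\cdot) - b_i(\cdot)$" and one derivative-gaining integration by parts are available; (b) the exponential error terms $e^{-cn}$ from Lemma~\ref{G'ofdriftless}, which are harmless but must be carried along; and (c) the time singularities near $r = 1$, handled by the restriction to $[0,T]$ and by choosing the exponents $\beta,\gamma$ in Corollary~\ref{uestimates} so that the powers of $|1-T|$ come out as $|1-T|^{-1/2}$ and $|1-T|^{-1}$ in the two terms. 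Finally, assembling (S1) and (S2) and invoking the last bound of Lemma~\ref{sewinglemma} yields the claimed estimate, after absorbing the $n^{-\alpha/2}$ term from (S1) into the (larger, for the relevant range of $|t-s|$) $n^{-(1+\alpha)/2+\varepsilon}$ term at the cost of the arbitrarily small $\varepsilon$.
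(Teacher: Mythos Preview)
Your proposal has the roles of (S1) and (S2) inverted, and this creates a genuine gap. In the stochastic sewing lemma the final bound is $KC_1|t-s|^{1/2+\varepsilon_1} + KC_2|t-s|^{1+\varepsilon_2}$, so \emph{both} constants $C_1$ and $C_2$ must carry the factor $n^{-(1+\alpha)/2+\varepsilon}$ for the conclusion to hold. Your (S1) bound gives only $C_1 \sim n^{-\alpha/2}|1-T|^{-1/2}$, and $n^{-\alpha/2}$ decays \emph{slower} than $n^{-(1+\alpha)/2}$, not faster; you cannot ``absorb'' it into the $C_2$-term. The $|t-s|$ powers do not rescue this: for $|t-s|$ of order one, the $C_1$-term dominates with the wrong $n$-rate. (Note also that if you take the germ with $f_r(\bar X_r^n)$ unfrozen, then $\mathbb{E}^s\delta A_{s,u,t}\equiv 0$ and the whole bound is $C_1|t-s|^{1/2+\varepsilon_1}$, so the issue is even starker.)

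The paper places the regularisation argument in (S1), not (S2). With the germ $A_{s,t} = \mathbb{E}^s\int_s^t (b_i(\bar X_r^n)-b_i(\bar X_{\kappa_n(r)}^n))f_r(\bar X_s^n)\,dr$ (so $f_r$ is frozen at $\bar X_s^n$), one pulls $f_r(\bar X_s^n)$ out using $\|f_r\|_{C^0}\lesssim |1-T|^{-1/2}$ and is left with $\int_s^t\|\mathbb{E}^s(b_i(\bar X_r^n)-b_i(\bar X_{\kappa_n(r)}^n))\|_{L_p}\,dr$. It is \emph{this} conditional expectation that is bounded using Lemma~\ref{G'ofdriftless}: one writes $\mathbb{E}^s(b_i(\bar X_r^n)-b_i(\bar X_{\kappa_n(r)}^n)) = \mathbb{E}^s\tilde g(\bar X_{\kappa_n(r)}^n)$ for an explicit function $\tilde g$, shows that $\|\tilde g\|_{C^{-1+2\varepsilon}}\lesssim n^{-(1+\alpha)/2+\varepsilon}$ by a careful heat-kernel computation (this is the bulk of the work), and then applies Lemma~\ref{G'ofdriftless} to the solution of $(1+\Delta)\tilde u = \tilde g$ to gain the $|r-s|^{-1/2}$ factor. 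Integrating in $r$ yields $C_1 \sim n^{-(1+\alpha)/2+\varepsilon}|1-T|^{-1/2}$. The bound (S2) then follows almost for free: $\delta A_{s,u,t}$ contains the factor $f_r(\bar X_s^n)-f_r(\bar X_u^n)$, controlled via $\|f_r\|_{C^1}\lesssim |1-T|^{-1}$ and $\|\bar X_s^n-\bar X_u^n\|_{L_{2p}}\lesssim |t-s|^{1/2}$, multiplied by the integral already estimated in (S1).
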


\begin{proof}
We will prove this using Lemma \ref{sewinglemma}. Notice that by Hölder's inequality it suffices to show the bound for $p\geq2$, furthermore  by a standard mollification argument if suffices to consider smooth $b$.

Suppose that $0\leq s\leq t\leq T <1$, and let

$$A_{s,t} : = \mathbb{E}^s \int_s^t (b_i(\bar{X}_r^n) - b_i(\bar{X}_{\kappa_n(r)}^n))f_r(\bar{X}_s^n)dr.$$
We will first check that condition (S1) holds. This will be done analogously to the proof of Lemma 6.1 in \cite{butkovsky2021approximation}. Note, that by using Corollary \ref{uestimates},
\begin{align}\label{Astfirstbound}
\| A_{s,t}\|_{L_p} &= \left\|\int_s^t f_r(\bar{X}_s^n) \mathbb{E}^s(b_i(\bar{X}_r^n) - b_i(\bar{X}_{\kappa_n(r)}^n)) \,dr\right\|_{L_p} \nonumber\\
&\lesssim |1-T|^{-1/2}\|g\|_{C^0} \int_s^t \| \mathbb{E}^s(b_i(\bar{X}_r^n) - b_i(\bar{X}_{\kappa_n(r)}^n))\|_{L_p}\,dr.
\end{align}
We will now proceed to bound the integrand. Suppose that $\kappa_n(s) = \frac{k}{n}$. We will first consider the case when $r\geq\frac{k+4}{n}$. Then we have $\kappa_n(r) \geq s$, and thus
\begin{align*}
\mathbb{E}^s (b_i(\bar{X}_r) - b_i(\bar{X}_{\kappa_n(r)}^n)) 
&= \mathbb{E}^s \mathbb{E}^{\kappa_n(r)}(b_i(\bar{X}_{\kappa_n(r)}^n + \sigma(\bar{X}_{\kappa_n(r)}^n)(W_r - W_{\kappa_n(r)})) - b_i(\bar{X}_{\kappa_n(r)}^n))\\
& =\mathbb{E}^s\left(\mathbb{E}\left(b_i(x + \sigma(x)(W_r - W_{\kappa_n(r)})) - b_i(x)\right)\Big\vert_{x = \bar{X}_{\kappa_n(r)}^n}\right).
\end{align*}
Hence defining $\tilde{g}:\mathbb{R}^d \to \mathbb{R}$ by
$$\tilde{g}(x) = \tilde{g}_r^n(x) : = \int_{\mathbb{R}}(b_i(x+y) - b_i(x))p_{\sigma\sigma^T(x)\delta}(y)\,dy$$
with $\delta : = r - \kappa_n(r) \leq n^{-1}$, and using that the restriction of $\bar{X}^n$ to the gridpoints $\frac{1}{n}, \frac{2}{n}, \dots 1$ is a Markov process, we have
\begin{align*}
\mathbb{E}^s (b_i(\bar{X}_r) - b_i(\bar{X}_{\kappa_n(r)}^n))  &= \mathbb{E}^s\tilde{g}(\bar{X}_{\kappa_n(r)}^n)\\
& = \mathbb{E}^s\mathbb{E}^{\frac{k+1}{n}}\tilde{g}\left(\bar{X}_{\kappa_n(r) - \frac{k+1}{n} + \frac{k+1}{n}}\right)\\
& = \mathbb{E}^s \left(\mathbb{E}\tilde{g}\left(\bar{X}_{\kappa_n(r) - \frac{k+1}{n}}(x)\right)\Big\vert_{x = \bar{X}_{\frac{k+1}{n}}^n}\right)
\end{align*}
and thus
\begin{equation}\label{integrandfirstbound}
|\mathbb{E}^s(b_i(\bar{X}_r^n) - b_i(\bar{X}_{\kappa_n(r)}^n))| \leq \sup_{x\in \mathbb{R}}|\mathbb{E}\tilde{g}(\bar{X}_{\kappa_n(r) - \frac{k+1}{n}}(x))|.
\end{equation}
Just like in the proof of Lemma 6.1 in \cite{butkovsky2021approximation}, we have
\begin{equation}\label{gtildenorm}
\|\tilde{g}\|_{C^{\alpha/2}} \lesssim \|b\|_{C^{\alpha}},
\end{equation}
and for the solution $\tilde{u}$ of $(1+\Delta)\tilde{u} = \tilde{g}$, we have
\begin{equation}\label{utildenorm}
\|\tilde{u}\|_{C^{2+\alpha/2}}\lesssim \|\tilde{g}\|_{C^{\alpha/2}}  \qquad\text{and}\qquad \|\tilde{u}\|_{C^{1+2\varepsilon}} \lesssim \|\tilde{g}\|_{C^{-1 + 2\varepsilon}},
\end{equation}
and thus using (\ref{integrandfirstbound}), Lemma \ref{G'ofdriftless},(\ref{gtildenorm}),(\ref{utildenorm}), we have
\begin{align}\label{integrandsecondbound}
\|\mathbb{E}^s(b_i(\bar{X}_r^n) - b_i(\bar{X}_{\kappa_n(r)}^n))\|_{L_p} &\leq \sup_{x \in \mathbb{R}^d}|\mathbb{E}(\tilde{u}+\Delta \tilde{u})(\bar{X}_{\kappa_n(r) - \frac{k+1}{n}}(x))| \nonumber\\
&\lesssim \|\tilde{u}\|_{C^1}|\kappa_n(r) - (k+1)/n|^{-1/2} + \|\tilde{u}\|_{C^2}e^{-cn} \nonumber\\
&\lesssim \|\tilde{u}\|_{C^{1+2\varepsilon}}|\kappa_n(r) - (k+1)/n|^{-1/2} + \|\tilde{u}\|_{C^{2+\alpha/2}}e^{-cn} \nonumber\\
&\lesssim \|\tilde{g}\|_{C^{-1 + 2\varepsilon}} |\kappa_n(r) - (k+1)/n|^{-1/2} + \|\tilde{g}\|_{C^{\alpha/2}}e^{-cn}.
\end{align}
We now need to bound $\|\tilde{g}\|_{C^{-1 + 2\varepsilon}}$. For convenience, we will use the notation $M(x,z): =\sigma\sigma^T(x-z)$. Note, that for $\gamma \in (0,1]$ we have
\begin{equs}
\mathcal{P}_\gamma \tilde{g}(x) &= \int_{\mathbb{R}^d} p_\gamma(z)\tilde{g}(x-z)\,dz\\
 &= \int_{\mathbb{R}^d} p_\gamma(z) \int_{\mathbb{R}^d}\left(b_i(x-z+y) - b_i(x-z)\right)p_{M(x,z)\delta}(y)\,dy\,dz\\
 & = - \sum_{j=1}^d \int_{\mathbb{R}^d}\int_{\mathbb{R}^d} p_\gamma(z)p_{M(x,z)\delta}(y)\int_0^1 y_j \partial_{z_j}b_i(x-z+\theta y)\, d\theta \,dy \,dz\\
 & = \sum_{j=1}^d \int_{\mathbb{R}^d}\int_{\mathbb{R}^d} \partial_{z_j}\left(p_\gamma(z)p_{M(x,z)\delta}(y)\right)y_j \int_0^1 b_i(x-z+\theta y)\,d\theta \,dy \,dz\\
& = \sum_{j=1}^d\left( I_1^j + I_2^j\right)\\
\end{equs}
where
\begin{equ}
I_1^j :=  \int_{\mathbb{R}^d}\int_{\mathbb{R}^d} \partial_{z_j}\left(p_\gamma(z)\right)p_{M(x,z)\delta}(y)y_j \int_0^1 b_i(x-z+\theta y)\,d\theta \,dy\, dz
\end{equ}
and
\begin{equ}
I_2^j := \int_{\mathbb{R}^d}\int_{\mathbb{R}^d} p_\gamma(z) \partial_{z_j}\left(p_{M(x,z)\delta}(y)\right)y_j \int_0^1 b_i(x-z+\theta y)\,d\theta \,dy \,dz.
\end{equ}
As when bounding these, the $j$-dependence will not play a role, we will drop the superscript $j$ and write $I_1$ and $I_2$ respectively.
First, we will bound $I_1$. Note that if we removed the $y$-dependency of $b_i$ in $I_1$, then the integral would be zero, that is
\begin{equs}
\tilde{I}_1 &:= \int_{\mathbb{R}^d}\int_{\mathbb{R}^d} \partial_{z_j}\left(p_\gamma(z)\right)p_{M(x,z)\delta}(y)y_j \int_0^1 b_i(x-z)\,d\theta \,dy \,dz\\
& = \int_{\mathbb{R}^d} \partial_{z_j}(p_\gamma(z))b_i(x -z) \left(\int_{\mathbb{R}^d} y_j p_{M(x,z)\delta}(y)\,dy \right) \,dz\\
&=0.
\end{equs}
Hence $I_1$ can be bounded as follows:
\begin{equs}
|I_1| &= |I_1 - \tilde{I}_1|\\
&= \left|\int_{\mathbb{R}^d}\int_{\mathbb{R}^d} \partial_{z_j}\left(p_\gamma(z)\right)p_{M(x,z)\delta}(y)y_j \int_0^1 (b_i(x-z+\theta y) - b_i(x-z))\,d\theta \,dy \,dz \right|\\
&\leq \int_{\mathbb{R}^d}\int_{\mathbb{R}^d} |\partial_{z_j}(p_\gamma(z))|p_{M(x,z)\delta}(y)|y_j| \|b\|_{C^\alpha}|y|^\alpha \,dy\,dz\\
&\lesssim \int_{\mathbb{R}^d}\int_{\mathbb{R}^d} \frac{|z|}{\gamma}p_\gamma(z)p_{M(x,z)\delta}(y)|y|^{1+\alpha}\,dy\,dz\\
&\lesssim \frac{1}{\gamma} n^{-\frac{1+\alpha}{2}}\gamma^{1/2}\\
& \lesssim \gamma^{-1/2}n^{-\frac{1+\alpha}{2}}.
\end{equs}
Here we used that $|\partial_{z_j} p_\gamma(z)|\lesssim |z| \gamma^{-1} p_\gamma(z)$ (see e.g. the proof of Lemma 6.1 in \cite{butkovsky2021approximation}) and that for $Z \sim \mathcal{N}(0,M(x,z)\delta)$, we have $\mathbb{E}|Z|^{1+\alpha} \lesssim \delta^{(1+\alpha)/2}$.

We will now bound $I_2$. To this end, note that
\begin{align}\label{heatkernelderivative}
&\partial_{z_j}(p_{M(x,z)\delta}(y))  \nonumber\\
&= \partial_{z_j}\left(\det(2\pi M(x,z)\delta)^{-1/2}) \exp\left( -\frac{1}{2} y^T(M(x,z)\delta)^{-1} y\right)\right) \nonumber\\
& = -\frac{1}{2}\det(2\pi M(x,z)\delta)^{-3/2}\partial_{z_j}(\det(2\pi M(x,z)\delta))\exp\left( -\frac{1}{2} y^T(M(x,z)\delta)^{-1} y\right) \nonumber\\
&\qquad + \det(2\pi M(x,z)\delta^{-1/2})\exp\left( -\frac{1}{2} y^T(M(x,z)\delta)^{-1} y\right) \partial_{z_j}\left(-\frac{1}{2}y^T(M(x,z)\delta)^{-1} y\right) \nonumber\\
& = -\frac{1}{2}\left(\frac{\partial_{z_j}(\det(M(x,z)))}{\det(M(x,z))} + 
\frac{\partial_{z_j}(y^T M(x,z)^{-1} y)}{\delta}\right)p_{M(x,z)\delta}(y)
\end{align}
where due to our assumptions on $\sigma$ the following bounds hold:
\begin{equs}\label{heatkernelbounds}
\left|\frac{\partial_{z_j}(\det(M(x,z)))}{\det(M(x,z))}\right| \lesssim 1 ,\qquad  \left|\frac{\partial_{z_j}(y^T M(x,z)^{-1} y)}{\delta}\right| \lesssim \delta^{-1}|y|^2.
\end{equs}
Using (\ref{heatkernelderivative}), $I_2$ can be decomposed as follows:
\begin{equ}
I_2 = I_{2,1} + I_{2,2}
\end{equ}
for
\begin{equ}
I_{2,1} =\int_{\mathbb{R}^d}\int_{\mathbb{R}^d} p_\gamma(z) 
\left(-\frac{1}{2}\frac{\partial_{z_j}(\det(M(x,z)))}{\det(M(x,z))}p_{M(x,z)\delta}(y)\right)
y_j \int_0^1 b_i(x-z+\theta y)\,d\theta\, dy \,dz
\end{equ}
and
\begin{equ}
I_{2,2} =  \int_{\mathbb{R}^d}\int_{\mathbb{R}^d} p_\gamma(z) 
\left( -\frac{1}{2}\frac{\partial_{z_j}(y^T M(x,z)^{-1} y)}{\delta}p_{M(x,z)\delta}(y)\right)
y_j \int_0^1 b_i(x-z+\theta y)\,d\theta \,dy\, dz.
\end{equ}
The expression $I_{2,1}$ is easier to bound. By the same reasoning as we used for $I_1$, we have that if we removed the $y$-dependence of $b_i$ in $I_{2,1}$ then the integral would be zero. Hence we have
\begin{align*}
|I_{2,1}|&= \Big|\int_{\mathbb{R}^d}\int_{\mathbb{R}^d} p_\gamma(z) 
\left(-\frac{1}{2}\frac{\partial_{z_j}(\det(M(x,z)))}{\det(M(x,z))}p_{M(x,z)\delta}(y)\right)
y_j \\
&\qquad\qquad\qquad\qquad\int_0^1 (b_i(x-z+\theta y) - b_i(x-z))\,d\theta \,dy \,dz\Big|\\
&\lesssim  \int_{\mathbb{R}^d}\int_{\mathbb{R}^d} p_\gamma(z) p_{M(x,z)\delta}(y)\|b\|_{C^\alpha}|y|^\alpha |y_j| \,dy\,dz\\
&\lesssim n^{-\frac{1+\alpha}{2}},
\end{align*}
where we used (\ref{heatkernelbounds}).

Finally, we need to bound $I_{2,2}$. It is less straightforward to see that if we removed the $y$-dependency of $b_i$ in $I_{2,2}$, the integral would be zero. To show this, define
\begin{align*}
\tilde{I}_{2,2} &=  \int_{\mathbb{R}^d}\int_{\mathbb{R}^d} p_\gamma(z) 
\left( -\frac{1}{2}\frac{\partial_{z_j}(y^T M(x,z)^{-1} y)}{\delta}p_{M(x,z)\delta}(y)\right)
y_j \int_0^1 b_i(x-z)\,d\theta \,dy \,dz\\
& =\int_{\mathbb{R}^d} -\frac{1}{2} p_\gamma(z)b_i(x-z)\frac{1}{\delta}
\left(\int_{\mathbb{R}^d}p_{M(x,z)\delta}(y) y_j \partial_{z_j}(y^T M(x,z)^{-1} y) \,dy\right)\,dz.
\end{align*}
We will show that in this expression the integral with respect to $y$ is always zero. To this end, note that
\begin{equs}
y_j \partial_{z_j}(y^T M(x,z)^{-1} y) &= y_j \partial_{z_j}\sum_{k=1}^d y_k (M(x,z)^{-1}y)_k\\
& = y_j \partial_{z_j}\sum_{k=1}^d y_k \sum_{l=1}^d (M(x,z)^{-1})_{kl}y_l\\
& = \sum_{k,l =1}^d y_j y_k y_l \partial_{z_j}(M(x,z)^{-1})_{kl},
\end{equs}
and therefore
\begin{equs}
\int_{\mathbb{R}}^d y_j \partial_{z_j}(y^T M(x,z)^{-1} y) p_{M(x,z)\delta}(y) \,dy = 
\sum_{l,k =1}^d\partial_{z_j}(M(x,z)^{-1})_{kl} \int_{\mathbb{R}^d}y_j y_k y_l  p_{M(x,z)\delta}(y)\,dy.
\end{equs}
To see that the integral in the right hand side expression is zero, note that
$$\int_{\mathbb{R}}^d y_j y_k y_l p_{M(x,z)\delta} dy = \mathbb{E}\left(Y_j Y_k Y_l\right)$$
with $Y \sim \mathcal{N}(0, M(x,z)\delta)$. Since $Y$ is a Gaussian random vector with mean zero, so is
$$\tilde{Y} :=(Y_j,Y_k,Y_l),$$
hence $-\tilde{Y}$ has the same distribution as $\tilde{Y}$. Thus with $q:\mathbb{R}^3 \to \mathbb{R}$, $p(x_1,x_2,x_3) =x_1x_2x_3$ we have
$$\mathbb{E}\left(Y_j Y_k Y_l\right) = \mathbb{E}q(\tilde{Y}) =\mathbb{E}q(-\tilde{Y}) = \mathbb{E}\left((-Y_j)(-Y_k)(-Y_l)\right) = -\mathbb{E}q(\tilde{Y}).$$
So since $\mathbb{E}q(\tilde{Y})= -\mathbb{E}q(\tilde{Y})$, we have
$$\mathbb{E}\left(Y_j Y_k Y_l\right) = \mathbb{E}q(\tilde{Y}) =0.$$
We have shown that $\tilde{I}_{2,2} =0$. Thus we have
\begin{align*}
&|I_{2,2}| \\
&= |I_{2,2} - \tilde{I}_{2,2}|\\
 &= \left| \int_{\mathbb{R}^d}\int_{\mathbb{R}^d} p_\gamma(z) 
\left( -\frac{1}{2}\frac{\partial_{z_j}(y^T M(x,z)^{-1} y)}{\delta}p_{M(x,z)\delta}(y)\right)
y_j \int_0^1 (b_i(x-z+\theta y) - b_i(x-z))\,d\theta \,dy \,dz\right|\\
&\leq \int_{\mathbb{R}^d}\int_{\mathbb{R}^d}p_\gamma(z)\left|\frac{\partial_{z_j}(y^T M(x,z)^{-1} y)}{\delta} \right|
p_{M(x,z)\delta}(y)|y_j|\|b\|_{C^\alpha}|y|^\alpha\, dy\,dz\\
&\lesssim \int_{\mathbb{R}^d}\int_{\mathbb{R}^d}p_{\gamma}(z)\delta^{-1}|y|^2 p_{M(x,z)\delta}(y)|y|^{1+\alpha}\,dy\,dz\\
& \lesssim \delta^{-1} \int_{\mathbb{R}^d} p_\gamma(z)\left( \int_{\mathbb{R}^d} |y|^{3+\alpha} p_{M(x,z)\delta}(y)\,dy \right)\,dz\\
&\lesssim \delta^{-1} \delta^{\frac{3+\alpha}{2}}\\
& \lesssim \delta^{\frac{1+\alpha}{2}}\\
& \lesssim n^{-\frac{1+\alpha}{2}},
\end{align*}
where we again used (\ref{heatkernelbounds}).

From the bounds on $|I_1|, |I_{2,1}|, |I_{2,2}|$, we conclude that
\begin{equ}
\|\mathcal{P}_\gamma \tilde{g}\|_{C^0} \lesssim \gamma^{-1/2}n^{-\frac{1+\alpha}{2}}.
\end{equ}
Note, that we also have the trivial estimate
\begin{equ}
\|\mathcal{P}_\gamma \tilde{g}\|_{C^0} \lesssim \|\tilde{g}\|_{C^0} \lesssim \|b\|_{C^0}.
\end{equ}
Using these two estimates, we have that for $\beta = -1+2\varepsilon$,
\begin{align*}
\|\tilde{g}\|_{C^\beta} &=\sup_{\gamma \in (0,1]} \gamma^{-\beta/2}\|\mathcal{P}_\gamma \tilde{g}\|_{C^0}\\
&\lesssim \sup_{\gamma \in (0,1]} \gamma^{-\beta/2}((\gamma^{-1/2}n^{-\frac{1+\alpha}{2}})\wedge \|b\|_{C^0} )\\
& \lesssim \sup_{\gamma \in (0,1]}\gamma^{-\beta/2}((\gamma^{-1/2}n^{-\frac{1+\alpha}{2}})\wedge 1 )\\
&\lesssim \sup_{\gamma \in (0,n^{-(1+\alpha)}]} \gamma^{-\beta/2}\cdot1 + \sup_{\gamma \in (n^{-(1+\alpha)},1]} \gamma^{-\beta/2 - 1/2}n^{-\frac{1+\alpha}{2}}\\
& \lesssim {\left(n^{-(1+\alpha)}\right)}^{1/2 - \varepsilon} + {\left(n^{-(1+\alpha)}\right)}^{-\varepsilon} n^{-\frac{1+\alpha}{2}}\\
&  \lesssim n^{-\frac{1+\alpha}{2} + (1+\alpha)\varepsilon}.
\end{align*}

Using this to bound the first term of (\ref{integrandsecondbound}) and using (\ref{gtildenorm}) to bound the second term of (\ref{integrandsecondbound}), we find that
\begin{equation}\label{integrandgoodbound}
\|\mathbb{E}^s (b_i(\bar{X}_r^n) - b_i(\bar{X}_{\kappa_n(r)}^n))\|_{L_p} \lesssim n^{-\frac{1+\alpha}{2} + (1+\alpha)\varepsilon} |\kappa_n(r) - (k+1)/n|^{-1/2}.
\end{equation}
Recall that to obtain this bound, we assumed that $r\geq \frac{k+4}{n}$. We also have the trivial bound
\begin{equation}\label{integrandtrivialbound}
\|\mathbb{E}^s(b_i(\bar{X}_r^n) - b_i(\bar{X}_{\kappa_n(r)}^n))\|_{L_p} \lesssim n^{-\alpha/2}
\end{equation}
which holds for any $r\geq s$.

We are now ready to bound $\| A_{s,t}\|_{L_p}$. First suppose that $t \geq \frac{k+4}{n}$. Then by (\ref{Astfirstbound}) we have
\begin{align}\label{J1plusJ2inbboundlemma}
\|A_{s,t}\|_{L_p} &\lesssim |1-T|^{-1/2}\int_s^t \| \mathbb{E}^s(b_i(\bar{X}_r^n) - b_i(\bar{X}_{\kappa_n(r)}^n))\|_{L_p}\,dr \nonumber\\
&= |1-T|^{-1/2}\left(\int_s^\frac{k+4}{n} \| \mathbb{E}^s(b_i(\bar{X}_r^n) - b_i(\bar{X}_{\kappa_n(r)}^n))\|_{L_p}\,dr
+\int_\frac{k+4}{n}^t \| \mathbb{E}^s(b_i(\bar{X}_r^n) - b_i(\bar{X}_{\kappa_n(r)}^n))\|_{L_p}\,dr
\right) \nonumber\\
& =: |1-T|^{-1/2}(J_1 + J_2).
\end{align}
Note that using the trivial bound (\ref{integrandtrivialbound}) we have
\begin{align}\label{J1inbboundlemma}
J_1 &\lesssim \int_s^\frac{k+4}{n} n^{-\alpha/2} \,dr \nonumber\\
 &= n^{-\alpha/2}\left|(k+4)/n -s\right| \nonumber\\
 &= n^{-\alpha/2}\left|(k+4)/n -s\right|^{1/2 - \varepsilon}\left|(k+4)/n -s\right|^{1/2+\varepsilon} \nonumber\\
&\leq n^{-\alpha/2} \left |(k+4)/n -k/n\right|^{1/2 - \varepsilon} |t-s|^{1/2+\varepsilon} \nonumber\\
&\lesssim n^{-\frac{1+\alpha}{2}+\varepsilon}|t-s|^{1/2 + \varepsilon}.
\end{align}
Also, using the bound (\ref{integrandgoodbound}) we have
\begin{align}\label{J2inbboundlemma}
J_2 &\lesssim \int_\frac{k+4}{n}^t n^{-\frac{1+\alpha}{2} + (1+\alpha)\varepsilon} |\kappa_n(r) - (k+1)/n|^{-1/2} \,dr \nonumber\\
&\leq n^{-\frac{1+\alpha}{2} + (1+\alpha)\varepsilon} \int_\frac{k+4}{n}^t |r -(k+2)/n|^{-1/2}\,dr \nonumber\\
& = n^{-\frac{1+\alpha}{2} + (1+\alpha)\varepsilon}\left[2 |r -(k+2)/n|^{1/2}\right]_{r=\frac{k+4}{n}}^{r=t} \nonumber\\
&\lesssim n^{-\frac{1+\alpha}{2} + (1+\alpha)\varepsilon} |t-s|^{1/2} \nonumber\\
&\lesssim  n^{-\frac{1+\alpha}{2} + (2+\alpha)\varepsilon} |t-s|^{1/2+\varepsilon}
\end{align}
where the last inequality holds, because $n|t-s|\geq n|(k+4)/n - (k+1)/n| =3.$

By (\ref{J1plusJ2inbboundlemma}),(\ref{J1inbboundlemma}),(\ref{J2inbboundlemma}) we have that for $t\geq \frac{k+4}{n}$, the following bound holds:
$$\|A_{s,t}\|_{L_p} \lesssim |1-T|^{-1/2} n^{-\frac{1+\alpha}{2} + (2+\alpha)\varepsilon} |t-s|^{1/2+\varepsilon}.$$

Now we need to deal with the case $s\leq t \leq \frac{k+4}{n}$. We can obtain the same estimate simply using (\ref{Astfirstbound}) and the trivial bound (\ref{integrandtrivialbound}):

\begin{align*}
\|A_{s,t}\|_{L_p} &\lesssim|1-T|^{-1/2}\int_s^t \| \mathbb{E}^s(b(\bar{X}_r^n) - b(\bar{X}_{\kappa_n(r)}^n))\|_{L_p}\,dr.\\
&\lesssim |1-T|^{-1/2}\int_s^t n^{-\alpha/2} \,dr\\
& = |1-T|^{-1/2} n^{-\alpha/2} |t-s|\\
& =|1-T|^{-1/2} n^{-\alpha/2} |t-s|^{1/2 - \varepsilon}|t-s|^{1/2+\varepsilon}\\
&\leq |1-T|^{-1/2} n^{-\alpha/2} |(k+4)/n - k/n|^{1/2 - \varepsilon}|t-s|^{1/2+\varepsilon}\\
&\lesssim |1-T|^{-1/2} n^{-\frac{1+\alpha}{2} + \varepsilon}|t-s|^{1/2+\varepsilon}\\
&\lesssim |1-T|^{-1/2} n^{-\frac{1+\alpha}{2} + (2+\alpha)\varepsilon} |t-s|^{1/2+\varepsilon}.
\end{align*}
Thus condition (S1) holds with $\varepsilon_1 = \varepsilon$ and $C_1 = N|1-T|^{-1/2} n^{-\frac{1+\alpha}{2} + (2+\alpha)\varepsilon}$.

We will proceed by showing that condition (S2) holds. Note that

\begin{align*}
\mathbb{E}^s\delta A_{s,u,t} &= \mathbb{E}^s(A_{s,t} - A_{s,u} - A_{u,t})\\
&  = \mathbb{E}^s \Bigg(\mathbb{E}^s\int_s^t (b_i(\bar{X}_r^n) - b_i(\bar{X}_{\kappa_n(r)}^n))f_r(\bar{X}_s^n)dr - \mathbb{E}^s\int_s^u (b_i(\bar{X}_r^n) - b_i(\bar{X}_{\kappa_n(r)}^n))f_r(\bar{X}_s^n)\,dr\\
& \qquad\qquad - \mathbb{E}^u\int_u^t (b_i(\bar{X}_r^n) - b_i(\bar{X}_{\kappa_n(r)}^n))f_r(\bar{X}_u^n)\,dr\Bigg)\\
& = \int_u^t \mathbb{E}^s \mathbb{E}^u \left((b_i(\bar{X}_r^n) - b_i(\bar{X}_{\kappa_n(r)}^n))(f_r(\bar{X}_s^n) - f_r(\bar{X}_u^n))\right)\,dr\\
& = \int_u^t \mathbb{E}^s\left((f_r(\bar{X}_s^n) - f_r(\bar{X}_u^n)) \mathbb{E}^u(b_i(\bar{X}_r^n) - b(\bar{X}_{\kappa_n(r)}^n))\right)\,dr.
\end{align*}
Therefore
\begin{align}\label{Asut}
\|\mathbb{E}^s\delta A_{s,u,t}\|_{L_p} &\leq \int_u^t \|(f_r(\bar{X}_s^n) - f_r(\bar{X}_u^n)) \mathbb{E}^u(b_i(\bar{X}_r^n) - b_i(\bar{X}_{\kappa_n(r)}^n))\|_{L_p}\,dr \nonumber\\
&\leq \int_u^t \|f_r(\bar{X}_s^n) - f_r(\bar{X}_u^n)\|_{L_{2p}} \|\mathbb{E}^u(b_i(\bar{X}_r^n) - b_i(\bar{X}_{\kappa_n(r)}^n))\|_{L_{2p}}\,dr \nonumber\\
&\leq \sup_{r \in [u,t]}\|f_r\|_{C^1}\|\bar{X}_s^n - \bar{X}_u^n\|_{L_{2p}} \int_u^t  \|\mathbb{E}^u(b_i(\bar{X}_r^n) - b_i(\bar{X}_{\kappa_n(r)}^n))\|_{L_{2p}}\,dr.
\end{align}
We will now bound each parts of this expression. Using our previous result that we obtained while bounding $\|A_{s,t}\|$, we have
\begin{align}\label{Asut1}
\int_u^t\|\mathbb{E}^u(b_i(\bar{X}_r^n) - b_i(\bar{X}_{\kappa_n(r)}^n))\|_{L_{2p}}\,dr &\lesssim |t-u|^{1/2 + \varepsilon} n^{-\frac{1+\alpha}{2} + (2+\alpha)\varepsilon} \nonumber\\
 &\lesssim |t-s|^{1/2 + \varepsilon} n^{-\frac{1+\alpha}{2} + (2+\alpha)\varepsilon}.
\end{align}
Moreover, using the Schauder-estimate Corollary \ref{uestimates},
\begin{equation}\label{Asut2}
\sup_{r \in [u,t]}\|f_r\|_{C^1} \lesssim \sup_{r\in [u,T]}|1-r|^{-1}\|g\|_{C^0} \lesssim |1-T|^{-1}.
\end{equation}
It is easy to see that
\begin{equs}\label{Asut3}
\|\bar{X}_s^n - \bar{X}_u^n\|_{L_{2p}} \lesssim |t-s|^{1/2}.
\end{equs}
Now using the bounds (\ref{Asut1}),(\ref{Asut2}),(\ref{Asut3}), we can bound (\ref{Asut}) as follows:
\begin{align*}
\|\mathbb{E}^s\delta A_{s,u,t}\|_{L_p}&\lesssim |1-T|^{-1} |t-s|^{1/2} |t-s|^{1/2 + \varepsilon} n^{-\frac{1+\alpha}{2} + (2+\alpha)\varepsilon}\\
& \lesssim |1-T|^{-1}n^{-\frac{1+\alpha}{2} + (2+\alpha)\varepsilon} |t-s|^{1+\varepsilon}.
\end{align*}
Hence (S2) holds with $\varepsilon_2 = \varepsilon$ and $C_2 =N |1-T|^{-1}n^{-\frac{1+\alpha}{2} + (2+\alpha)\varepsilon}.$

Let $$\bar{\mathcal{A}}_t : =\int_0^t (b_i(\bar{X}_r^n) - b_i(\bar{X}_{\kappa_n(r)}^n))f_r(\bar{X}_r^n)\,dr.$$
We will show that conditions (S3) and (S4) hold with $\bar{\mathcal{A}}$ in place of $\mathcal{A}$.
We can see that condition (S3) holds, because for any $(s,t)\in [0,T]_{\leq}$ we have
\begin{align*}
\|\bar{\mathcal{A}}_t - \bar{\mathcal{A}}_s - A_{s,t}\|_{L_p} &= \Big\| \int_s^t(b_i(\bar{X}_r^n) - b_i(\bar{X}_{\kappa_n(r)}^n))f_r(\bar{X}_r^n)\,dr\\
&\qquad\qquad - \int_s^t \mathbb{E}^s((b_i(\bar{X}_r^n) - b_i(\bar{X}_{\kappa_n(r)}^n))f_r(\bar{X}_s^n))\,dr\Big\|_{L_p}\\
&\lesssim \|b\|_{C^0}\sup_{r \in [s,t]}\|f_r\|_{C^0}|t-s|\\
&\lesssim (1-T)^{-1/2}|t-s|\\
&\lesssim (1-T)^{-1/2}|t-s|^{1/2 + \varepsilon}.
\end{align*}
Furthermore, condition (S4) holds, because for any $(s,t)\in[0,T]_{\leq}$,
\begin{align*}
\|\mathbb{E}^s(\bar{\mathcal{A}}_{t}- \bar{\mathcal{A}}_s - A_{s,t} )\|_{L_p} &= \Bigg\|\mathbb{E}^s \Bigg(\int_s^t (b_i(\bar{X}_r^n) - b_i(\bar{X}_{\kappa_n(r)}^n)f_r(\bar{X}_r^n)\,dr \\
&\qquad\qquad- \mathbb{E}^s \int_s^t (b_i(\bar{X}_r^n) - b_i(\bar{X}_{\kappa_n(r)}^n)f_r(\bar{X}_s^n)\,dr\Bigg)\Bigg\|_{L_p}\\
&\leq \int_s^t \| (b_i(\bar{X}_r^n) - b_i(\bar{X}_{\kappa_n(r)}^n)(f_r(\bar{X}_r^n) - f_r(\bar{X}_s^n))\|_{L_p}\,dr\\
&\lesssim \|b\|_{C^0} \int_s^t \|f_r\|_ {C^1}\| \bar{X}_r^n - \bar{X}_{s}^n\|_{L_p}\,dr\\
&\lesssim (1-T)^{-1} \int_s^t|r-s|^{1/2}\,dr\\
&\lesssim (1-T)^{-1}|t-s|^{3/2}\\
&\lesssim (1-T)^{-1} |t-s|^{1+\varepsilon},
\end{align*}
where we used that $\|\bar{X}_r^n - \bar{X}_s^b\| \lesssim |r-s|^{1/2}$, and Corollary \ref{uestimates}.

Since conditions (S1) - (S4) of Lemma \ref{sewinglemma} are satisfied, by uniqueness of $\mathcal{A}$ we have that
$$\mathcal{A} = \bar{\mathcal{A}}$$
and the bound we aimed to show holds for any smooth $b \in C^\alpha$. The smoothness assumption then can be removed by constructing a sequence of smooth approximations of $b \in C^\alpha$ via mollification and then passing to the limit. This finishes the proof.
\end{proof}

We will now proceed by showing that the result of Lemma \ref{bquadrature} still holds if we consider $X^n$ in place of the driftless scheme $\bar{X}^n$.

\begin{lemma}\label{bquadraturegeneral}
Let $p\geq 1$, $\eps>0$, and let  $X^n$ be given by (\ref{ourscheme}). Then  for all $n\in \mathbb{N}, (s,t)\in[0,T]_{\leq}$ we have
\begin{multline*}\left\|\int_s^t (b_i({X}_r^n)  - b_i({X}_{\kappa_n(r)}^n))f_r({X}_r^n)\,dr\right\|_{L_p} \leq  N n^{-\frac{1+\alpha}{2} + \varepsilon}\Big(|1-T|^{-1/2}|t-s|^{1/2+\varepsilon} \\
+ |1-T|^{-1}|t-s|^{1+\varepsilon}\Big)
\end{multline*}
for some constant $N$ depending only on  $d, \alpha,p, \varepsilon, \lambda, \|b\|_{C^\alpha}, \|\sigma\|_{C^2}, \|g\|_{C^0}$.
\end{lemma}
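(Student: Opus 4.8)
The plan is to deduce this from Lemma~\ref{bquadrature} by a Girsanov-type change of drift, as anticipated in Section~\ref{quadratureestimatessection}: the drift of $X^n$ is the \emph{bounded} process $r\mapsto b(X^n_{\kappa_n(r)})$, so it can be removed by passing to an equivalent measure under which $X^n$ becomes a driftless scheme, and then one invokes the bound already proved for $\bar X^n$.

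First I would note that $\sigma\sigma^T\geq\lambda I$ forces $\sigma$ to be invertible with $\|\sigma^{-1}\|_{C^0}\leq\lambda^{-1/2}$, so that $\theta^n_r:=\sigma^{-1}(X^n_{\kappa_n(r)})\,b(X^n_{\kappa_n(r)})$ is an $\mathbb{R}^d$-valued, $\mathscr{F}_{\kappa_n(r)}$-adapted process bounded by $\lambda^{-1/2}\|b\|_{C^0}$. Set $\mathcal{E}_t:=\exp\big(-\int_0^t\theta^n_r\cdot dW_r-\tfrac12\int_0^t|\theta^n_r|^2\,dr\big)$; boundedness of $\theta^n$ gives Novikov's condition, so $\mathcal{E}$ is a genuine martingale and $d\mathbb{Q}:=\mathcal{E}_1\,d\mathbb{P}$ defines a probability measure equivalent to $\mathbb{P}$ on $\mathscr{F}_1$. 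By Girsanov's theorem $\tilde W_t:=W_t+\int_0^t\theta^n_r\,dr$ is a $\mathbb{Q}$-Wiener process, and substituting $dW_t=d\tilde W_t-\theta^n_t\,dt$ into (\ref{ourscheme}) collapses the drift:
\[
dX^n_t=\sigma(X^n_{\kappa_n(t)})\,d\tilde W_t,\qquad X^n_0=x_0.
\]
Since this equation is solved path-by-path as an explicit recursion over the grid $\{k/n:k=0,\dots,n\}$ in terms of $\tilde W$, the law of $(X^n_t)_{t\in[0,1]}$ under $\mathbb{Q}$ coincides with the law under $\mathbb{P}$ of the driftless scheme $(\bar X^n_t)_{t\in[0,1]}$ of (\ref{driftless}) started at $x_0$; recall that the constant in Lemma~\ref{bquadrature} is uniform in the initial datum of $\bar X^n$.

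Now set $Z:=\int_s^t(b_i(X^n_r)-b_i(X^n_{\kappa_n(r)}))f_r(X^n_r)\,dr$, a fixed deterministic functional of the path of $X^n$. Using $d\mathbb{P}=\mathcal{E}_1^{-1}\,d\mathbb{Q}$ and Hölder's inequality with exponents $(2,2)$,
\[
\mathbb{E}^{\mathbb{P}}|Z|^p=\mathbb{E}^{\mathbb{Q}}\big[|Z|^p\,\mathcal{E}_1^{-1}\big]\leq\big(\mathbb{E}^{\mathbb{Q}}|Z|^{2p}\big)^{1/2}\big(\mathbb{E}^{\mathbb{Q}}\mathcal{E}_1^{-2}\big)^{1/2},
\]
that is, $\|Z\|_{L_p(\mathbb{P})}\leq\|Z\|_{L_{2p}(\mathbb{Q})}\big(\mathbb{E}^{\mathbb{Q}}\mathcal{E}_1^{-2}\big)^{1/(2p)}$. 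For the weight I would use that under $\mathbb{Q}$ one has $\mathcal{E}_1^{-1}=\exp\big(\int_0^1\theta^n_r\cdot d\tilde W_r-\tfrac12\int_0^1|\theta^n_r|^2\,dr\big)$, so $\mathcal{E}_1^{-2}$ equals a $\mathbb{Q}$-exponential martingale (with integrand $2\theta^n$, still bounded) times $\exp\big(\int_0^1|\theta^n_r|^2\,dr\big)\leq e^{\lambda^{-1}\|b\|_{C^0}^2}$; taking $\mathbb{Q}$-expectation yields $\mathbb{E}^{\mathbb{Q}}\mathcal{E}_1^{-2}\leq e^{\lambda^{-1}\|b\|_{C^0}^2}$, a constant of the allowed type. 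For the remaining factor, the law identification gives $\|Z\|_{L_{2p}(\mathbb{Q})}=\big\|\int_s^t(b_i(\bar X^n_r)-b_i(\bar X^n_{\kappa_n(r)}))f_r(\bar X^n_r)\,dr\big\|_{L_{2p}(\mathbb{P})}$, which by Lemma~\ref{bquadrature} applied with exponent $2p$ and the same $\varepsilon$ is bounded by $N n^{-\frac{1+\alpha}{2}+\varepsilon}\big(|1-T|^{-1/2}|t-s|^{1/2+\varepsilon}+|1-T|^{-1}|t-s|^{1+\varepsilon}\big)$ with $N=N(d,\alpha,p,\varepsilon,\lambda,\|b\|_{C^\alpha},\|\sigma\|_{C^2},\|g\|_{C^0})$. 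Multiplying the two estimates gives the claim.

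I do not expect a genuine obstacle: this is a transfer lemma and the delicate analysis already lives in Lemma~\ref{bquadrature}. The only points that need care are (i) the identification of the $\mathbb{Q}$-law of $X^n$ with the $\mathbb{P}$-law of $\bar X^n$, which is immediate since the Euler--Maruyama scheme is a deterministic functional of its driving path, and (ii) the integrability of the Radon--Nikodym weight and its inverse, which follows purely from the boundedness of $\sigma^{-1}b$ granted by uniform ellipticity together with $b\in C^0$; in particular, unlike in the proof of Lemma~\ref{bquadrature}, no mollification of $b$ is required here, since Girsanov only needs $b$ bounded and measurable.
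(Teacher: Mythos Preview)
Your argument is correct and follows essentially the same route as the paper: remove the drift by Girsanov using the bounded integrand $\sigma^{-1}b(X^n_{\kappa_n(\cdot)})$, identify the $\mathbb{Q}$-law of $X^n$ with the $\mathbb{P}$-law of $\bar X^n$, and transfer the $L_{2p}$-bound from Lemma~\ref{bquadrature} via H\"older with the Radon--Nikodym weight. Your treatment of the weight $\mathbb{E}^{\mathbb{Q}}\mathcal{E}_1^{-2}$ is in fact slightly more explicit than the paper's, which simply notes that $\rho$ has finite moments of all orders.
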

\begin{proof} The proof is a standard argument via Girsanov's theorem. By Hölder's inequality it suffices to show that the bound holds sufficiently large $p$. Let
$$\rho := \exp\left(-\int_0^1 (\sigma^{-1} b)(X_{\kappa_n(t)}^n)\,dW_r - \frac{1}{2}\int_0^1 |(\sigma^{-1} b)(X_{\kappa_n(t)}^n)|^2 \,dt\right).$$ By the assumptions on $b$ and $\sigma$, we have that $\sigma^{-1} b$ is bounded, hence $\mathbb{E}\rho =1$, thus Girsanov's theorem applies. Construct a new probability measure $\tilde{\mathbb{P}}$ by
$$d\tilde{\mathbb{P}} = \rho \,d \mathbb{P}.$$
Then under $\tilde{\mathbb{P}}$, the process
$$\tilde{W}_t := W_t + \int_0^t (\sigma^{-1}b)(X_{\kappa_n(r)}^n)\,dr, \quad t \in [0,1]$$ 
is an $\mathbb{F}$-Wiener process. Notice that
$$dX_t^n = \sigma(X_{\kappa_n(t)}^n)\,d\tilde{W}_t,$$
hence the distribution of $X^n$ under $\tilde{\mathbb{P}}$ coincides with the distribution of $\bar{X}^n$ under $\mathbb{P}$. For any continuous process $Z$, let us denote $\xi(Z) = \int_s^t (b(Z_r) - b(Z_{\kappa_n(r)}))f_r(Z_r)dr$. Then
\begin{align*}
\mathbb{E}|\xi(X^n)|^p  &= \tilde{\mathbb{E}}\left(|\xi(X^n)|^p \rho^{-1}\right)\\
&\leq \left(\tilde{\mathbb{E}}|\xi(X^n)|^{2p}\right)^{1/2}(\tilde{\mathbb{E}}\rho^{-2})^{1/2}\\
& = \left(\mathbb{E}|\xi(\bar{X}^n)|^{2p}\right)^{1/2}(\mathbb{E}\rho^{-1})^{1/2}.
\end{align*}
From the assumptions on $b$ and $\sigma$ it follows that $\rho$ has finite moments of any order. Hence $\mathbb{E}|\xi(X^n)|^p \lesssim \left(\mathbb{E}|\xi(\bar{X}^n)|^{2p}\right)^{1/2}$, and thus
$$\|\xi(X^n)\|_{L_p} \lesssim \| \xi(\bar{X})\|_{L_{2p}}.$$
Now the bound we aimed to show follows from Lemma \ref{bquadrature}.
\end{proof}

Finally, in in order to bound  the expression (\ref{bexpression1}), we need to modify Lemma \ref{bquadraturegeneral} by extending the domain of integration from $[s,t]$ (where $0\leq s\leq t\leq T<0$) to $[0,1]$. The difficulty of this comes from the blowup of the bound in Lemma \ref{bquadraturegeneral} as $T\to 1$. We will thus need to ``trade'' the $|t-s|$-dependence of the bound to extend the domain of integration as desired. This can be done using dyadic points.

\begin{corollary}\label{boundonbexpression}
 Let  $X^n$ be given by (\ref{ourscheme}) and let $\varepsilon >0$, $p \geq 1$. Then, we have
\begin{align*}
\left\|\int_0^1 (b_i(X_r^n) - b_i(X_{\kappa_n(r)}^n))f(X_r^n)\,dr \right\|_{L_p} \leq N n^{-\frac{1+\alpha}{2} + \varepsilon}
\end{align*}
for some constant $N$ depending only on $d,\alpha,p, \varepsilon, \lambda, \|b\|_{C^\alpha}, \|\sigma\|_{C^2}, \|g\|_{C^0}$.
\end{corollary}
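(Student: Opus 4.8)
The plan is to cover $[0,1)$ by dyadic blocks accumulating at the terminal time and to apply Lemma~\ref{bquadraturegeneral} on each block, taking the right endpoint of the block as the parameter $T$; the factor of $|t-s|^{\varepsilon}$-worth of smallness coming from the shrinking block length will absorb the blowup of the constant as $T\to 1$.

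In detail, for $k\in\N\cup\{0\}$ put $s_k:=1-2^{-k}$, so that $s_0=0$, $s_k\uparrow 1$, and $[0,1)=\bigcup_{k\ge0}[s_k,s_{k+1})$. Since $b$ is bounded and, by Corollary~\ref{uestimates}, $\|f_r\|_{C^0}\lesssim (1-r)^{-1/2}$ is integrable on $[0,1]$, the random variable $\int_0^1 (b_i(X_r^n)-b_i(X_{\kappa_n(r)}^n))f_r(X_r^n)\,dr$ is well defined and bounded, and
\[
\int_0^1 (b_i(X_r^n)-b_i(X_{\kappa_n(r)}^n))f_r(X_r^n)\,dr=\sum_{k=0}^{\infty}\int_{s_k}^{s_{k+1}} (b_i(X_r^n)-b_i(X_{\kappa_n(r)}^n))f_r(X_r^n)\,dr,
\]
the series converging absolutely a.s.\ (it is dominated by $2\|b\|_{C^0}\int_0^1(1-r)^{-1/2}\,dr$). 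By Minkowski's inequality it therefore suffices to bound the $L_p$-norm of each summand and check that the resulting series of norms converges.

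Fix $k$ and apply Lemma~\ref{bquadraturegeneral} with $T=s_{k+1}=1-2^{-(k+1)}<1$ and $(s,t)=(s_k,s_{k+1})\in[0,T]_{\le}$. Then $|1-T|=2^{-(k+1)}$ and $|t-s|=s_{k+1}-s_k=2^{-(k+1)}$, so both terms on the right-hand side of Lemma~\ref{bquadraturegeneral} equal $\bigl(2^{-(k+1)}\bigr)^{\varepsilon}$, whence
\[
\Big\|\int_{s_k}^{s_{k+1}} (b_i(X_r^n)-b_i(X_{\kappa_n(r)}^n))f_r(X_r^n)\,dr\Big\|_{L_p}\le 2N\,n^{-\frac{1+\alpha}{2}+\varepsilon}\,2^{-(k+1)\varepsilon}.
\]
Summing over $k\ge0$ and using $\sum_{k\ge0}2^{-(k+1)\varepsilon}=(2^{\varepsilon}-1)^{-1}<\infty$ yields the asserted bound, with $N$ enlarged by the factor $2(2^{\varepsilon}-1)^{-1}$, which depends only on $\varepsilon$.

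There is no genuine obstacle: all the work sits in Lemma~\ref{bquadraturegeneral} and the present step is just a summation. The point worth flagging is \emph{why} it closes: the exponents $1/2+\varepsilon$ and $1+\varepsilon$ in Lemma~\ref{bquadraturegeneral} were arranged precisely so that on a block whose length is comparable to its distance from $1$ the powers of $|t-s|$ beat the negative powers of $|1-T|$ by exactly $2^{-k\varepsilon}$, making the series geometric; with the ``critical'' exponents $1/2$ and $1$ (no $\varepsilon$) one would be left with $\sum_k 1=\infty$ and the argument would fail. It is also legitimate to take $T=s_{k+1}$ on $[s_k,s_{k+1}]$ since $s_{k+1}<1$ for every finite $k$, even though $s_{k+1}\to1$.
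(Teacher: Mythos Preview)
Your proof is correct and follows essentially the same dyadic decomposition as the paper: the paper also partitions $[0,1)$ into the blocks $[1-2^{-j},\,1-2^{-(j+1)}]$ (writing them as $\big(\sum_{k=1}^j 2^{-k},\,\sum_{k=1}^{j+1}2^{-k}\big]$ and treating $[0,1/2]$ separately), applies Lemma~\ref{bquadraturegeneral} on each, and sums the resulting geometric series in $2^{-j\varepsilon}$. Your presentation is slightly cleaner in that it handles all blocks, including the first, uniformly and justifies the a.s.\ absolute convergence of the series directly via the integrable bound $\|f_r\|_{C^0}\lesssim(1-r)^{-1/2}$ rather than passing to a limit $m\to\infty$ over $[0,1-2^{-m}]$.
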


\begin{proof}

For simplicity, let us use the notation
$Y_r^n : = (b_i(X_r^n) - b_i(X_{\kappa_n(r)}^n))f(X_r^n)$.

Note, that
\begin{align}\label{A1plusA2}
\left\| \int_0^{1-2^{-m}} Y_r^n \,dr\right \|_{L_p} &\leq \left\|\int_0^{1/2} Y_r^n \,dr\right\|_{L_p} + \left\| \int_{1/2}^{1-2^{-m}}Y_r^n dr\right\|_{L_p} \nonumber\\
& =: A_1 + A_2(m).
\end{align}

By Lemma \ref{bquadraturegeneral} we have that
\begin{align*}
A_1  &= \left\|\int_0^{1/2} Y_r^n \,dr \right\|_{L_p} \lesssim n^{-\frac{1+\alpha}{2}}.
\end{align*}

To bound $A_2(m)$, note the elementary identity
$$\sum_{k=1}^{m} 2^{-k} = 1 - 2^{-m}$$
and that therefore the collection of intervals $\{(\sum_{k=1}^j 2^{-k}, \sum_{k=1}^{j+1} 2^{-k}] : j \in \{1,\dots, m-1\}\}$ forms a partition of the interval $(1/2,1 - 2^{-m}]$. Using this, and Lemma \ref{bquadraturegeneral}, we have
\begin{align*}
A_2(m) &= \left\| \int_{1/2}^{1-2^{-m}}Y_r^n \,dr\right\|_{L_p}\\
& = \left\| \sum_{j=1}^{m-1} \int_{\sum_{k=1}^j 2^{-k}}^{\sum_{k=1}^{j+1} 2^{-k}} Y_r^n \,dr \right\|_{L_p}\\
& \leq\sum_{j=1}^{m-1} \left\| \int_{\sum_{k=1}^j 2^{-k}}^{\sum_{k=1}^{j+1} 2^{-k}} Y_r^n \,dr \right\|_{L_p}\\
&\lesssim \sum_{j=1}^{m-1} n^{-\frac{1+\alpha}{2}+{\varepsilon}}\Bigg(\left(1 - \sum_{k=1}^{j+1} \left(\frac{1}{2}\right)^k\right)^{-1/2}\left|\left(\frac{1}{2}\right)^{j+1}\right|^{1/2+\varepsilon}
+ \left(1 - \sum_{k=1}^{j+1} \left(\frac{1}{2}\right)^k\right)^{-1}\left|\left(\frac{1}{2}\right)^{j+1}\right|^{1+\varepsilon}\Bigg)\\
& \lesssim  n^{-\frac{1+\alpha}{2}+{\varepsilon}}  \sum_{j=1}^{m-1}\Bigg(\left(2^{-(j+1)}\right)^{-1/2} 2^{-(j+1)(1/2+\varepsilon)}
+ \left(2^{-(j+1)}\right)^{-1} 2^{ -(j+1)(1+\varepsilon)}\Bigg)\\
& \lesssim n^{-\frac{1+\alpha}{2}+{\varepsilon}}  \sum_{j=1}^{m-1}
\Bigg(2^{\frac{j+1}{2} - \frac{j+1}{2} - \varepsilon(j+1)} + 2^{(j+1) - (j+1) - \varepsilon(j+1)}\Bigg)\\
&\lesssim n^{-\frac{1+\alpha}{2}+{\varepsilon}} 2 \sum_{j=1}^{m-1} 2^{-\varepsilon(j+1)}\\
& \lesssim n^{-\frac{1+\alpha}{2}+{\varepsilon}} 2^{1-\varepsilon}\sum_{j=1}^{m-1} (2^{-\varepsilon})^j
\end{align*}
Therefore
\begin{align*}
\lim_{m\to \infty} A(m) &\lesssim n^{-\frac{1+\alpha}{2}+{\varepsilon}} 2^{1-\varepsilon}\sum_{j=1}^{\infty} (2^{-\varepsilon})^j\\
&\lesssim n^{-\frac{1+\alpha}{2}+{\varepsilon}} 2^{1-\varepsilon}\frac{1}{1 - 2^{-\varepsilon}}\\
& \lesssim \frac{2}{2^\varepsilon -1}n^{-\frac{1+\alpha}{2}+{\varepsilon}} 
\end{align*}
Now passing to the limit $m \to \infty$ in (\ref{A1plusA2}) and using our bounds on $A_1, A_2$ gives the result.
\end{proof}

Corollary \ref{boundonbexpression} above will be used to bound (\ref{bexpression0}). We will now prove that a similar bound holds on $(\ref{sigmaexpression0})$. For notational convenience, we will replace $(\sigma\sigma^T)_{ij}$ in $(\ref{sigmaexpression0})$ with a function $h:\mathbb{R}^d \to \mathbb{R}$ of identical regularity.
We moreover fix $i,j \in \{1,\dots,d\}$ and denote 
$$f'_t := \partial_{x_i x_j} u(t,\cdot).$$

\begin{lemma}\label{sameboundforotherterm}
Let $X^n$ given by $(\ref{ourscheme})$, let $h \in C^2$ and $\varepsilon>0$. Then, for all $n \in \mathbb{N}$ we have
\begin{align*}
\left|\mathbb{E}\int_0^1 (h(X_r^n) - h(X_{\kappa_n(r)}^n))f'_r(X_r^n) \,dr \right| \leq N n^{-\frac{1+\alpha}{2}+ \varepsilon}
\end{align*}
for some constant $N$ depending only on $d, \alpha, \lambda, \|b\|_{C^\alpha},\|\sigma\|_{C^2} \|h\|_{C^2}, \|g\|_{C^\alpha}$.
\end{lemma}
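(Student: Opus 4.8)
The key point is that, because $h\in C^2$ is far more regular than the drift, no stochastic sewing is needed here: conditioning on $\mathscr F_{\kappa_n(r)}$ together with the Schauder bounds of Corollary~\ref{uestimates} already produces the rate. Fix $r\in[0,1)$ and abbreviate $\kappa:=\kappa_n(r)$; on $[\kappa,\kappa+1/n)$ one has $X_r^n=X_\kappa^n+b(X_\kappa^n)(r-\kappa)+\sigma(X_\kappa^n)(W_r-W_\kappa)$ with $r-\kappa\le 1/n$, so conditionally on $\mathscr F_\kappa$ the increment $X_r^n-X_\kappa^n$ equals the deterministic term $b(X_\kappa^n)(r-\kappa)$ plus an independent $\mathcal N(0,(r-\kappa)\sigma\sigma^T(X_\kappa^n))$ vector. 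I would first split $\int_0^1=\int_0^{1-1/n}+\int_{1-1/n}^1$. On $[1-1/n,1]$ the trivial bound is enough: from $|h(X_r^n)-h(X_\kappa^n)|\le\|h\|_{C^1}|X_r^n-X_\kappa^n|$, $\mathbb{E}|X_r^n-X_\kappa^n|\lesssim n^{-1/2}$, and $\|f'_r\|_{C^0}\le\|u(r,\cdot)\|_{C^2}\lesssim(1-r)^{-1+\alpha/2}$ (Corollary~\ref{uestimates} with $\beta=\alpha$, $\gamma=2$), this piece is bounded by $\lesssim n^{-1/2}\int_{1-1/n}^1(1-r)^{-1+\alpha/2}\,dr\lesssim n^{-1/2}\cdot n^{-\alpha/2}=n^{-(1+\alpha)/2}$.

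On the bulk $[0,1-1/n]$ one must exploit cancellation. For each such $r$ I would decompose
\begin{multline*}
\mathbb{E}\big[(h(X_r^n)-h(X_\kappa^n))f'_r(X_r^n)\big]\\
=\mathbb{E}\big[(h(X_r^n)-h(X_\kappa^n))\big(f'_r(X_r^n)-f'_r(X_\kappa^n)\big)\big]\\
{}+\mathbb{E}\big[f'_r(X_\kappa^n)\,\mathbb{E}^\kappa\big(h(X_r^n)-h(X_\kappa^n)\big)\big].
\end{multline*}
For the first (``diagonal'') term I would estimate pointwise $|h(X_r^n)-h(X_\kappa^n)|\,|f'_r(X_r^n)-f'_r(X_\kappa^n)|\le\|h\|_{C^1}\|f'_r\|_{C^\alpha}|X_r^n-X_\kappa^n|^{1+\alpha}$, take expectations using $\mathbb{E}|X_r^n-X_\kappa^n|^{1+\alpha}\lesssim|r-\kappa|^{(1+\alpha)/2}\le n^{-(1+\alpha)/2}$ (only boundedness of $b,\sigma$ and the Gaussian structure of the increment are used), and bound $\|f'_r\|_{C^\alpha}\le\|u(r,\cdot)\|_{C^{2+\alpha}}\lesssim(1-r)^{-1}$ via Corollary~\ref{uestimates} with $\beta=\alpha$, $\gamma=2+\alpha$. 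Integrating over $[0,1-1/n]$ gives $\lesssim n^{-(1+\alpha)/2}\int_0^{1-1/n}(1-r)^{-1}\,dr\lesssim n^{-(1+\alpha)/2}\log n\lesssim n^{-(1+\alpha)/2+\varepsilon}$.

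For the second (``frozen'') term, $f'_r(X_\kappa^n)$ is $\mathscr F_\kappa$-measurable and $\mathbb{E}^\kappa(h(X_r^n)-h(X_\kappa^n))=\psi_r(X_\kappa^n)$ with $\psi_r(y):=\mathbb{E}\big[h\big(y+b(y)(r-\kappa)+\sigma(y)\xi\big)-h(y)\big]$, $\xi\sim\mathcal N(0,(r-\kappa)I)$. Taylor-expanding $h$ to first order and using that $\nabla h$ is Lipschitz (part of the $C^2$ norm): the linear term integrates against the centred Gaussian to leave only $\nabla h(y)\cdot b(y)(r-\kappa)$, of size $\lesssim n^{-1}$, while the remainder is $\lesssim\mathbb{E}|b(y)(r-\kappa)+\sigma(y)\xi|^2\lesssim|r-\kappa|\le n^{-1}$; hence $\sup_y|\psi_r(y)|\lesssim n^{-1}$. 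Since $\|f'_r\|_{C^0}\le\|u(r,\cdot)\|_{C^2}\lesssim(1-r)^{-1+\alpha/2}$ is integrable on $[0,1]$, this term contributes $\lesssim n^{-1}\int_0^{1}(1-r)^{-1+\alpha/2}\,dr\lesssim n^{-1}$. Summing the endpoint, diagonal, and frozen contributions yields the asserted bound $\lesssim n^{-(1+\alpha)/2+\varepsilon}$ (no mollification of $h$ is actually required, since only $C^1$ plus a Lipschitz gradient are used).

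The main obstacle, and the only place the exponent $(1+\alpha)/2$ rather than $1$ appears, is the time singularity of $\partial^2 u(r,\cdot)$ as $r\to1$. The available Schauder estimates on $u$ reach only order $2+\alpha$, so in the diagonal term we must pair the full increment $|X_r^n-X_\kappa^n|\lesssim n^{-1/2}$ against the Hölder increment $|X_r^n-X_\kappa^n|^{\alpha}$ (total order $1+\alpha$, giving $n^{-(1+\alpha)/2}$ with the mild weight $(1-r)^{-1}$, only log-divergent) rather than against a second factor $n^{-1/2}$; one full derivative is spent on $h\in C^1$ and the remaining $\alpha$ on $f'_r=\partial^2 u$, which is exactly what the $C^{2+\alpha}$ ceiling permits. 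The genuinely singular ``frozen'' contribution is only $O(n^{-1})$, so in contrast with Lemma~\ref{bquadrature} the smoothness of $h$ makes the whole estimate elementary, with the logarithmic loss absorbed into $n^\varepsilon$.
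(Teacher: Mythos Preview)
Your proof is correct and follows essentially the same approach as the paper: condition on $\mathscr F_{\kappa_n(r)}$, Taylor-expand $h$ (using that $\nabla h$ is Lipschitz), invoke the Schauder bounds of Corollary~\ref{uestimates} at orders $2$ and $2+\alpha$, split the time integral at $1-1/n$, and absorb the resulting $\log n$ into $n^\varepsilon$. The only cosmetic difference is the organisation of the decomposition: the paper first writes the conditional expectation as $\sup_x$ of an explicit Gaussian integral $J$ and then splits $J=J_1+J_{2,1}+J_{2,2}$ (separating out the drift shift, then allocating the increment to either $\partial_k h$ or $f'_r$), whereas you stay at the level of expectations and split into a ``diagonal'' product of increments plus a ``frozen'' term; your frozen term recovers exactly the paper's $J_1+J_{2,1}$ (both of size $(1-r)^{-1+\alpha/2}n^{-1}$) and your diagonal term recovers $J_{2,2}$ (of size $(1-r)^{-1}n^{-(1+\alpha)/2}$).
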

\begin{proof}
Let $\varepsilon>0$. Note, that we have 
\begin{equation}\label{Efirstboundh}
\left|\mathbb{E}\int_0^1 (h(X_r^n) - h(X_{\kappa_n(r)}^n))f'_r(X_r^n) \,dr \right| \leq \int_0^1 | \mathbb{E}(h(X_r^n) - h(X_{\kappa_n(r)}^n))f'_r(X_r^n)|\, dr.
\end{equation}
We will now establish bounds on the integrand. Note that denoting $\delta = r - \kappa_n(r) \leq n^{-1}$, we have
\begin{align}\label{EissupofJ}
&|\mathbb{E}((h(X_r^n) - h(X_{\kappa_n(r)}^n))f'_r(X_r^n))| = \nonumber\\
&=\Bigg|\mathbb{E}\mathbb{E}^{\kappa_n(r)} \bigg[ \bigg(h\left(X_{\kappa_n(r)}^n + b(X_{\kappa_n(r)}^n)\delta + \sigma(X_{\kappa_n(r)}^n)(W_r - W_{\kappa_n(r)})\right)- h\left(X_{\kappa_n(r)}^n\right)\bigg) \nonumber\\
&\qquad\qquad\qquad f'_r\left(X_{\kappa_n(r)}^n + b(X_{\kappa_n(r)}^n)\delta + \sigma(X_{\kappa_n(r)}^n)(W_r - W_{\kappa_n(r)})\right)\bigg]\Bigg| \nonumber\\
& = \Bigg| \mathbb{E}\Bigg( \mathbb{E} \bigg[\left(h\left(x+ b(x)\delta + \sigma(x)(W_r  - W_{\kappa_n(r)})\right) - h(x)\right) \nonumber\\
&\qquad\qquad\qquad f'_r\left(x+ b(x)\delta + \sigma(x)(W_r  - W_{\kappa_n(r)})\right) \bigg]\bigg\vert_{x = X_{\kappa_n(r)}^n}\Bigg) \Bigg| \nonumber\\
&\leq \sup_{x \in \mathbb{R}^d}\left| \int_{\mathbb{R}} (h(x+b(x)\delta + y) - h(x))f'_r(x+b(x)\delta + y) p_{\sigma\sigma^T(x)\delta}(y) \,dy \right|.
\end{align}
We decompose the integral in the above expression in the following way:
\begin{align}\label{J1plusJ2}
J:=&\int_{\mathbb{R}} (h(x+b(x)\delta + y) - h(x))f'_r(x+b(x)\delta + y) p_{\sigma\sigma^T(x)\delta}(y) \,dy \nonumber\\
& = \int_{\mathbb{R}^d}p_{\sigma\sigma^T(x)\delta}(y)f'_r(x + b(x)\delta + y) (h(x+b(x)\delta+y)- h(x+y)) \,dy \nonumber\\
&\qquad + \int_{\mathbb{R}^d}p_{\sigma\sigma^T(x)\delta}(y)f'_r(x + b(x)\delta + y) (h(x+y) -h(x)) \,dy \nonumber\\
& =: J_1 + J_2.
\end{align}
The integral $J_1$ is easier to bound, as using the regularity of $h$ we can get a factor of $\delta$ as follows:
\begin{equs}\label{J1bound}
|J_1| & = \left|\int_{\mathbb{R}^d}p_{\sigma\sigma^T(x)\delta}(y)f'_r(x + b(x)\delta + y) (h(x+b(x)\delta+y)- h(x+y)) \,dy \right|\\
&\leq \int_{\mathbb{R}^d} p_{\sigma\sigma^T(x)\delta}(y) \|f'_r\|_{C^0}\|h\|_{C^1}\|b\|_{C^0}\delta \,dy\\
&\lesssim \|u(r,\cdot)\|_{C^2} n^{-1}\\
&\lesssim |1-r|^{-1+\alpha/2} n^{-1}
\end{equs}
where the last inequality holds by Corollary \ref{uestimates}. We will now bound $J_2$. To this end, note that
\begin{equs}
J_2 &= \int_{\mathbb{R}^d}p_{\sigma\sigma^T(x)\delta}(y) f'_r(x+b(x)\delta+y)(h(x+y) -h(x))\,dy \nonumber\\
& = \sum_{k=1}^d \int_{\mathbb{R}^d}p_{\sigma\sigma^T(x)\delta}(y)f'_r(x+b(x)\delta+y) \int_0^1 \partial_{x_k} h(x+\theta y ) y_k \,d\theta \,dy.
\end{equs}
Note that if we removed the $y$-dependence of $f'_r$ and of $\partial_{x_k} h$. then the integral would be zero, since $\int_{\mathbb{R}^d} y_k  p_{\sigma\sigma^T(x)\delta}(y) \,dy =0$. Hence
\begin{align}\label{J21plusJ22}
|J_2| &= \left|\sum_{k=1}^d \int_{\mathbb{R}^d} p_{\sigma\sigma^T(x)\delta}(y) y_k \int_0^1 \Big( f'_r(x+b(x)\delta+y)\partial_{x_k} h(x+\theta y) - f'_r(x+b(x)\delta)\partial_{x_k}h(x)\Big) \,d\theta \,dy\right|\nonumber\\
 &\lesssim \sup_{k}\left| J_{2,1}^k+J_{2,2}^k\right|
\end{align}
for
\begin{equ}
J_{2,1}^k =  \int_{\mathbb{R}^d} p_{\sigma\sigma^T(x)\delta}(y) y_k \int_0^1 f'_r(x+b(x)\delta+y)\Big( \partial_{x_k}h(x+\theta y) - \partial_{x_k}h(x)\Big) \,d\theta \,dy
\end{equ}
and
\begin{equ}
J_{2,2}^k = \int_{\mathbb{R}^d} p_{\sigma\sigma^T(x)\delta}(y) y_k \int_0^1 \partial_{x_k}h(x)\Big(f'_r(x+b(x)\delta+y) - f'_r(x+b(x)\delta)\Big) \,d\theta \,dy.
\end{equ}
We bound $J_{2,1}^k$ using the regularity of $h$:
\begin{align}\label{J21bound}
|J_{2,1}^k| &\leq \int_{\mathbb{R}^d} p_{\sigma\sigma^T(x)\delta}(y)|y| \|f'_r\|_{C^0}\|h\|_{C^2}|y| \,dy \nonumber\\
&\lesssim  \|u(r,\cdot)\|_{C^2} \int_{\mathbb{R}} |y|^2 p_{\sigma\sigma^T(x)\delta}(y)\,dy \nonumber\\
&\lesssim |1-r|^{-1+\alpha/2}\|\sigma\|_{C^0}^2 n^{-1} \nonumber\\
&\lesssim |1 - r|^{-1+\alpha/2} n^{-1}.
\end{align}
We moreover bound $J_{2,2}^k$ using the $\alpha$-Hölder-regularity of $f'$. This will increase the blowup in time:
\begin{align}\label{J22bound}
|J_{2,2}^k| &\lesssim \int_{\mathbb{R}^d} p_{\sigma\sigma^T(x)\delta}(y)|y|\|h\|_{C^1} \|f'\|_{C^\alpha}|y|^\alpha \,dy \nonumber\\
&\lesssim \|u(r,\cdot)\|_{C^{2+\alpha}} n^{-\frac{1+\alpha}{2}} \nonumber\\
&\lesssim |1 -r|^{-1}n^{-\frac{1+\alpha}{2}}.
\end{align}
By (\ref{J1plusJ2}),(\ref{J1bound}),(\ref{J21plusJ22}),(\ref{J21bound}),(\ref{J22bound}) we conclude that
$J \lesssim |1-r|^{-1}n^{-\frac{1+\alpha}{2}}$. Hence by (\ref{EissupofJ}) we have
\begin{equation}\label{nontrivialEbound}
|\mathbb{E}(h(X_r^n) - h(X_{\kappa_n(r)}^n))f'_r(X_r^n))| \lesssim |1-r|^{-1} n^{-\frac{1+\alpha}{2}}.
\end{equation}
The right hand side converges at the desired rate, however it cannot be integrated on $[0,1]$ with respect to $r$. Therefore we will also need to find an other bound with slower blowup at the terminal time.
To this end, note that $J_{2,2}$ can be also bounded as follows:
\begin{align}\label{trivialJ22bound}
|J_{2,2}| &\leq \int_{\mathbb{R}} p_{\sigma^2(x)\delta}(y) |y| \|h\|_{C^1}\|f'_r\|_{C^0} \,dy \nonumber\\
&\lesssim \| u(r,\cdot)\|_{C^2} \int_{\mathbb{R}} |y| p_{\sigma^2(x)\delta}(y) \,dy \nonumber\\
&\lesssim |1-r|^{-1+\alpha/2} n^{-1/2}.
\end{align}
So by (\ref{J1plusJ2}),(\ref{J1bound}),(\ref{J21plusJ22}),(\ref{J21bound}),(\ref{trivialJ22bound}) we have
\begin{align}\label{trivialEbound}
|\mathbb{E}(h(X_r^n) - h(X_{\kappa_n(r)}^n))f'_r(X_r^n))| \lesssim |1-r|^{-1+\alpha/2} n^{-1/2}
\end{align}
We are finally in the position to bound the expression we originally aimed to. By (\ref{Efirstboundh}),(\ref{nontrivialEbound}),(\ref{trivialEbound}) we have that for $a\in(0,1)$,
\begin{align*}
\left|\mathbb{E}\int_0^1 (h(X_r^n) - h(X_{\kappa_n(r)}^n))f'_r(X_r^n) \,dr \right| &\lesssim \int_0^a (1-r)^{-1}n^{-\frac{1+\alpha}{2}}\,dr + \int_a^1 (1-r)^{-1+\alpha/2} n^{-1/2}\, dr\\
& \lesssim n^{-\frac{1+\alpha}{2}}\left[ -\log(1-r)\right]_{r=0}^{r =a} + n^{-1/2}\left[ - (1-r)^{\alpha/2}\right]_{r=a}^1,
\end{align*}
thus choosing $a = 1 - \frac{1}{n}$, we have
\begin{align*}
\left|\mathbb{E}\int_0^1 (h(X_r^n) - h(X_{\kappa_n(r)}^n))f'_r(X_r^n) \,dr \right| &\lesssim n^{-\frac{1+\alpha}{2}}\left( - \log\left(\frac{1}{n}\right) +0\right)+ n^{-1/2}\left(0+ \left(\frac{1}{n}\right)^{\alpha/2}\right)\\
&\lesssim \log(n) n^{-\frac{1+\alpha}{2}}\\
&\lesssim n^{-\frac{1+\alpha}{2} + \varepsilon}
\end{align*}
as required.
\end{proof}

\section{Proof of the main result}\label{proofofmainresultsection}

Having established the quadrature estimates of section \ref{quadratureestimatessection}, we are now in the position to prove the main result.
\begin{proof}[Proof of Theorem \ref{mainresult}]

Let $u$ be the unique bounded solution of (\ref{ourpde}).
By the Feynmann-Kac formula, we have
$$u(0,x_0) = \mathbb{E}g(X_1),$$
furthermore, by the terminal condition we have
$$u(1,X_1^n) = g(X_1^n).$$
Hence the weak error can be expressed as
\begin{align}\label{talaytrick}
d_g(X,X^n)&:=\left|\mathbb{E}g(X_1^n) - \mathbb{E}g(X_1)\right|& \nonumber\\
&\,\,= \left|\mathbb{E}\left( u(1,X_1^n) - u(0,x_0)\right)\right| \nonumber\\
& \,\,= \left|\mathbb{E}\left(u(1,X_1^n) - u(0,X_0^n)\right)\right|.
\end{align}
Applying Itô's lemma gives

\begin{multline}
d_g(X,X^n)=\Bigg|\mathbb{E}\Bigg(\int_0^1\Big( \partial_t u(r,X_r^n) + \sum_{i=1}^d \partial_{x_i}u(r,X_r^n)b_i(X_{\kappa_n(r)}^n) + \\
\quad\frac{1}{2}\sum_{i,j =1}^d \partial_{x_i x_j}u(r,X_r^n)\sum_{k=1}^{d}\sigma_{ik}(X_{\kappa_n(r)}^n)\sigma_{jk}(X_{\kappa_n(r)}^n)\Big)\,dr\Bigg)\\
+\mathbb{E}\Bigg(\int_0^1 \sum_{i=1}^d\sum_{j=1}^{d}\partial_{x_i}u(r,X_r^n)\sigma_{ij}(X_{\kappa_n(r)}^n)\,dW_r^j\Bigg)\Bigg|.
\end{multline}

Note that since $g$ has positive Hölder-regularity, by Lemma \ref{uestimates} the integrand $\partial_{x_i} u(t,X_t^n)\sigma_{ij}(X_{\kappa_n(t)}^n)$ is square-integrable on $\Omega \times [0,1]$, and thus in the above expression the stochastic integral term has zero expectation.

Hence we are left with
$$d_g(X,X^n)= \left|\mathbb{E}\int_0^1\left( \partial_t u (r,X_r^n) + \bar{L} u \left(r, X_r^n, X_{\kappa_n(r)}^n\right) \right)\,dr\right|,$$
where the operator $\bar{L}$ is given by
$$\bar{L}\phi(t,x, \bar{x}) : = \sum_{i=1}^d b(\bar{x})\partial_{x_i} \phi(x) + \frac{1}{2}\sum_{i,j=1}^d (\sigma\sigma^T)_{ij}(\bar{x})\partial_{x_i x_j}\phi(x).$$
Recall that by our parabolic PDE (\ref{ourpde}), we have $\partial_t u =- Lu$. Hence the weak error can be written as
$$d_g(X,X^n)= \left| \mathbb{E}\int_0^1\left(\bar{L}u\left(r,X_r^n, X_{\kappa_n(r)}\right) - Lu\left(r,X_r^n\right)\right)\,dr\right|.$$
Writing out the operators explicitly gives
\begin{multline}
d_g(X,X^n)= \Bigg|\mathbb{E}\Bigg(\int_0^1 \Big(\sum_{i=1}^d(b_i(X_{\kappa_n(r)}^n) - b_i(X_r^n) \partial_{x_i}u(r,X_r^n)\\
+ \frac{1}{2}\sum_{i,j =1}^d((\sigma\sigma^T)_{ij}(X_{\kappa_n(r)}^n) - (\sigma\sigma^T)(X_r^n))\partial_{x_i x_j} u(r,X_r^n)\Big)\,dr\Bigg)\Bigg|
\end{multline}
So by the triangle-inequality we can see that it suffices to show that our bound holds for
\begin{equation}\label{bexpression}
\left|\mathbb{E}\int_0^1 \left(\left(b_i(X_{\kappa_n(r)}^n) - b_i(X_r^n)\right)\partial_{x_i}u(r,X_r^n)\right)\,dr\right|
\end{equation}
and for
\begin{equation}\label{sigmaexpression}
\left|\mathbb{E}\int_0^1 \left(\left(\sigma\sigma^T)_{ij}(X_{\kappa_n(r)}^n) - (\sigma\sigma^T)_{ij}(X_r^n)\right)\partial_{x_i x_j}u(r,X_r^n)\right)\,dr\right|.
\end{equation}
for all $i,j$ indices.

The desired bound now follows from Corollary \ref{boundonbexpression} and Lemma \ref{sameboundforotherterm}.
\end{proof}

\begin{remark}
If (\ref{sigmaexpression}) is zero, then we do not need Lemma \ref{sameboundforotherterm} which would require $g$ to be $C^\alpha$. Thus for the additive noise case, Theorem \ref{mainresult} holds when $g$ has any positive Hölder-regularity, i.e. for any $g \in C^\zeta$ for any $\zeta>0$.
\end{remark}

\section{Acknowledgements}
The author would like to thank Konstantinos Dareiotis for his advice.

\bibliographystyle{Martin}
\bibliography{references}

\begin{thebibliography}{KHLY11}
\expandafter\ifx\csname url\endcsname\relax
  \def\url#1{\texttt{#1}}\fi
\expandafter\ifx\csname urlprefix\endcsname\relax\def\urlprefix{URL }\fi
\expandafter\ifx\csname href\endcsname\relax
  \def\href#1#2{#2}\fi
\expandafter\ifx\csname burlalt\endcsname\relax
  \def\burlalt#1#2{\href{#2}{\texttt{#1}}}\fi

\bibitem[BDG21]{butkovsky2021approximation}
\textsc{O.~Butkovsky}, \textsc{K.~Dareiotis}, and \textsc{M.~Gerencs{\'e}r}.
\newblock Approximation of {SDEs}: a stochastic sewing approach.
\newblock \emph{Probability Theory and Related Fields}  1--60.

\bibitem[BHY19]{bao2019convergence}
\textsc{J.~Bao}, \textsc{X.~Huang}, and \textsc{C.~Yuan}.
\newblock Convergence rate of euler--maruyama scheme for sdes with
  h{\"o}lder--dini continuous drifts.
\newblock \emph{Journal of Theoretical Probability} \textbf{32}, no.~2, (2019),
  848--871.

\bibitem[BT96]{bally1996law}
\textsc{V.~Bally} and \textsc{D.~Talay}.
\newblock The law of the {Euler} scheme for stochastic differential equations.
\newblock \emph{Probability theory and related fields} \textbf{104}, no.~1,
  (1996), 43--60.

\bibitem[Dav07]{davie2007uniqueness}
\textsc{A.~M. Davie}.
\newblock Uniqueness of solutions of stochastic differential equations.
\newblock \emph{International Mathematics Research Notices}
  \textbf{2007}(2007).

\bibitem[DGL21]{dareiotis2021quantifying}
\textsc{K.~Dareiotis}, \textsc{M.~Gerencs{\'e}r}, and \textsc{K.~L{\^e}}.
\newblock Quantifying a convergence theorem of {Gy{\"o}ngy} and {Krylov}.
\newblock \emph{arXiv preprint arXiv:2101.12185, Accepted in the Annals of
  Applied Probability} (2021).

\bibitem[GK96]{gyongy1996existence}
\textsc{I.~Gy{\"o}ngy} and \textsc{N.~Krylov}.
\newblock Existence of strong solutions for {I}t{\^o}'s stochastic equations
  via approximations.
\newblock \emph{Probability theory and related fields} \textbf{105}, no.~2,
  (1996), 143--158.

\bibitem[GK21]{gyongy2021existence}
\textsc{I.~Gy{\"o}ngy} and \textsc{N.~Krylov}.
\newblock Existence of strong solutions for {I}t{\^o}'s stochastic equations
  via approximations. revisited.
\newblock \emph{arXiv preprint arXiv:2107.14384} (2021).

\bibitem[Hua15]{huang2015stable}
\textsc{L.~Huang}.
\newblock \emph{Stable-driven {SDE}s: a Parametrix Approach to Heat Kernel
  Estimates with an Application to Stochastic Algorithms}.
\newblock Ph.D. thesis, Univerist{\'e} Paris Diderot Paris 7, 2015.

\bibitem[JM21]{jourdain2021convergence}
\textsc{B.~Jourdain} and \textsc{S.~Menozzi}.
\newblock Convergence rate of the {Euler-Maruyama} scheme applied to diffusion
  processes with {$L^Q - L^\rho$} drift coefficient and additive noise.
\newblock \emph{arXiv preprint arXiv:2105.04860} (2021).

\bibitem[KHLY11]{kohatsu2011weak}
\textsc{A.~Kohatsu-Higa}, \textsc{A.~Lejay}, and \textsc{K.~Yasuda}.
\newblock On weak approximation of stochastic differential equations with
  discontinuous drift coefficient.
\newblock In \emph{Mathematical Economics}, vol. 1788,  94--106. 2011.

\bibitem[KM17]{konakov2017weak}
\textsc{V.~Konakov} and \textsc{S.~Menozzi}.
\newblock Weak error for the {Euler} scheme approximation of diffusions with
  non-smooth coefficients.
\newblock \emph{Electronic Journal of Probability} \textbf{22}, (2017), 1--47.

\bibitem[L{\^e}20]{le2020stochastic}
\textsc{K.~L{\^e}}.
\newblock A stochastic sewing lemma and applications.
\newblock \emph{Electronic Journal of Probability} \textbf{25}, (2020), 1--55.

\bibitem[LL21]{le2021taming}
\textsc{K.~L{\^e}} and \textsc{C.~Ling}.
\newblock Taming singular stochastic differential equations: A numerical
  method.
\newblock \emph{arXiv preprint arXiv:2110.01343} (2021).

\bibitem[LR21]{lorenzi2021semigroups}
\textsc{L.~Lorenzi} and \textsc{A.~Rhandi}.
\newblock \emph{Semigroups of bounded operators and second-order Elliptic and
  parabolic partial differential equations}.
\newblock CRC Press, 2021.

\bibitem[LS18]{leobacher2018convergence}
\textsc{G.~Leobacher} and \textsc{M.~Sz{\"o}lgyenyi}.
\newblock Convergence of the {Euler}--{Maruyama} method for multidimensional
  {SDE}s with discontinuous drift and degenerate diffusion coefficient.
\newblock \emph{Numerische Mathematik} \textbf{138}, no.~1, (2018), 219--239.

\bibitem[MGY20]{muller2020performance}
\textsc{T.~M{\"u}ller-Gronbach} and \textsc{L.~Yaroslavtseva}.
\newblock On the performance of the {Euler}--{Maruyama} scheme for {SDE}s with
  discontinuous drift coefficient.
\newblock In \emph{Annales de l'Institut Henri Poincar{\'e}, Probabilit{\'e}s
  et Statistiques}, vol.~56,  1162--1178. Institut Henri Poincar{\'e}, 2020.

\bibitem[Mil86]{mil1986weak}
\textsc{G.~Mil’shtein}.
\newblock Weak approximation of solutions of systems of stochastic differential
  equations.
\newblock \emph{Theory of Probability \& Its Applications} \textbf{30}, no.~4,
  (1986), 750--766.

\bibitem[MP91]{mikulevicius1991rate}
\textsc{R.~Mikulevicius} and \textsc{E.~Platen}.
\newblock Rate of convergence of the {Euler} approximation for diffusion
  processes.
\newblock \emph{Mathematische Nachrichten} \textbf{151}, no.~1, (1991),
  233--239.

\bibitem[MX18]{mikulevivcius2018rate}
\textsc{R.~Mikulevi{\v{c}}ius} and \textsc{F.~Xu}.
\newblock On the rate of convergence of strong {Euler} approximation for {SDE}s
  driven by {Levy} processes.
\newblock \emph{Stochastics} \textbf{90}, no.~4, (2018), 569--604.

\bibitem[Pag18]{pages2018numerical}
\textsc{G.~Pag{\`e}s}.
\newblock Numerical probability.
\newblock In \emph{Universitext}. Springer, 2018.

\bibitem[PT17]{pamen2017strong}
\textsc{O.~M. Pamen} and \textsc{D.~Taguchi}.
\newblock Strong rate of convergence for the {Euler}--{Maruyama} approximation
  of {SDE}s with {H}{\"o}lder continuous drift coefficient.
\newblock \emph{Stochastic Processes and their Applications} \textbf{127},
  no.~8, (2017), 2542--2559.

\bibitem[Tal84]{talay1984efficient}
\textsc{D.~Talay}.
\newblock Efficient numerical schemes for the approximation of expectations of
  functionals of the solution of a {SDE}, and applications.
\newblock In \emph{Filtering and control of random processes},  294--313.
  Springer, 1984.

\bibitem[Tal86]{talay1986discretisation}
\textsc{D.~Talay}.
\newblock Discr{\'e}tisation d'une {\'e}quation diff{\'e}rentielle stochastique
  et calcul approch{\'e} d'esp{\'e}rances de fonctionnelles de la solution.
\newblock \emph{ESAIM: Mathematical Modelling and Numerical
  Analysis-Mod{\'e}lisation Math{\'e}matique et Analyse Num{\'e}rique}
  \textbf{20}, no.~1, (1986), 141--179.

\bibitem[TT90]{talay1990expansion}
\textsc{D.~Talay} and \textsc{L.~Tubaro}.
\newblock Expansion of the global error for numerical schemes solving
  stochastic differential equations.
\newblock \emph{Stochastic analysis and applications} \textbf{8}, no.~4,
  (1990), 483--509.

\bibitem[Ver81]{veretennikov1981strong}
\textsc{A.~J. Veretennikov}.
\newblock On strong solutions and explicit formulas for solutions of stochastic
  integral equations.
\newblock \emph{Mathematics of the USSR-Sbornik} \textbf{39}, no.~3, (1981),
  387.

\bibitem[Zvo74]{zvonkin1974transformation}
\textsc{A.~K. Zvonkin}.
\newblock A transformation of the phase space of a diffusion process that
  removes the drift.
\newblock \emph{Mathematics of the USSR-Sbornik} \textbf{22}, no.~1, (1974),
  129.

\end{thebibliography}
\end{document}